\newtheorem{theorem}{Theorem}
\newtheorem{corollary}[theorem]{Corollary}
\newtheorem{definition}[theorem]{Definition}
\newtheorem{example}[theorem]{Example}
\newtheorem{lemma}[theorem]{Lemma}
\newtheorem{proposition}[theorem]{Proposition}
\newtheorem{remark}[theorem]{Remark}
\newenvironment{proof}[1][Proof]{\noindent\textbf{#1.} }{\ \rule{0.5em}{0.5em}}
\begin{document}

\title{Infinitesimally helicoidal motions with fixed pitch of oriented
	geodesics of a space form}
	\author{Mateo Anarella and Marcos Salvai
		\thanks{This work was supported by Consejo Nacional de Investigaciones Cient%
			\'{\i}ficas y T\'{e}cnicas and Secretar\'{\i}a de Ciencia y T\'{e}cnica de
			la Universidad Nacional de C\'{o}rdoba.}
}


\date{ }

\maketitle

\begin{abstract}
Let $\mathcal{G}$ be the manifold of all (unparametrized) oriented lines of $%
\mathbb{R}^{3}$. We study the controllability of the control system in $%
\mathcal{G}$ given by the condition that a curve in $\mathcal{G}$ describes
at each instant, at the infinitesimal level, an helicoid with prescribed
angular speed $\alpha $. Actually, we pose the analogous more general
problem by means of a control system on the manifold $\mathcal{G}_{\kappa }$
of all the oriented complete geodesics of the three dimensional space form
of curvature $\kappa $: $\mathbb{R}^{3}$ for $\kappa =0$, $S^{3}$ for $%
\kappa =1$ and hyperbolic 3-space for $\kappa =-1$. We obtain that the
system is controllable if and only if $\alpha ^{2}\neq \kappa $. In the
spherical case with $\alpha =\pm 1$, an admissible curve remains in the set
of fibers of a fixed Hopf fibration of $S^{3}$.

We also address and solve a sort of Kendall's (aka Oxford) problem in this
setting: Finding the minimum number of switches of piecewise continuous
curves joining two arbitrary oriented lines, with pieces in some
distinguished families of admissible curves.
\end{abstract}

\noindent Keywords and phrases: control system, space of oriented 	geodesics, helicoid, Oxford problem, Hopf fibration, Jacobi field

\medskip

\noindent Mathematics Subject Classification 2020: 34H05, 53A17, 53A35, 53C30, 70Q05

\section{Introduction}

For $\alpha \in \mathbb{R}$, the helicoid in standard position in $\mathbb{R}%
^{3}$ with angular speed $\alpha $ (or equivalently, with pitch $2\pi
/\alpha $ if $\alpha \neq 0$) is the parametrized surface%
\begin{equation*}
	\phi _{o}:\mathbb{R}^{2}\rightarrow \mathbb{R}^{3}\text{, \ \ \ \ \ \ }\phi
	_{o}\left( s,t\right) =s\cos \left( \alpha t\right) e_{1}+s\sin \left(
	\alpha t\right) e_{2}+te_{3}\text{.}
\end{equation*}%
An helicoid in $\mathbb{R}^{3}$ with angular speed $\alpha $ is a
parametrized surface congruent to $\phi _{o}$ by a rigid transformation of $%
\mathbb{R}^{3}$, that is, a map preserving the distance and the orientation.

Now we state vaguely the problem we are interested in: We fix $\alpha \in 
\mathbb{R}$. Given two oriented straight lines $\ell _{1}$ and $\ell _{2}$
in $\mathbb{R}^{3}$, can we move $\ell _{1}$ to $\ell _{2}$ in such a way
that the swept surface resembles at each instant, at the infinitesimal
level, an helicoid with angular speed $\alpha $?

This is a control problem which does not arise from a linear or affine
linear distribution. Thus, the convenient setting to pose it precisely is
the following, that we learned of from \cite{Agrachev} (see also Subsections
2.1 in \cite{AgrachevG} and 2.6 in \cite{Grong}).

\begin{definition}
	A \textbf{control system} on a smooth manifold $N$ is a fiber subbundle of
	the tangent bundle $TN$, 
	\begin{equation*}
		\begin{array}{lll}
			\mathcal{A} & \overset{\iota }{\longrightarrow } & TN \\ 
			& \searrow & \downarrow \pi \\ 
			&  & N\text{.}%
		\end{array}%
	\end{equation*}
	A smooth curve $\gamma :\left( a,b\right) \rightarrow N$ is said to be 
	\textbf{admissible }if $\gamma ^{\prime }\left( t\right) \in \iota \left( 
	\mathcal{A}\right) $ holds for each $t\in \left( a,b\right) $. A control
	system in $N$ is said to be \textbf{controllable} if for each pair of points
	in $N$ there exists a piecewise admissible curve joining them.
\end{definition}

Let $\mathcal{L}$ be the space of all oriented straight lines of $\mathbb{R}%
^{3}$. This is a four dimensional smooth manifold on which the group of
rigid transformations of $\mathbb{R}^{3}$ acts transitively. The problem
above translates into defining a certain subbundle $\mathcal{A}$ of the
tangent bundle $T\mathcal{L}$. For the sake of generality, we study it for
the three dimensional space forms, that is, we also consider curves in the
manifolds of oriented lines in hyperbolic space $H^{3}$ and of oriented
great circles in the sphere $S^{3}$. We call $\mathcal{G}_{\kappa }$ the
manifold of all oriented geodesics of the three dimensional space form of
curvature $\kappa $, in particular, $\mathcal{L}=\mathcal{G}_{0}$. It is
diffeomorphic to $TS^{2}$ for $\kappa =0,-1$ and to $S^{2}\times S^{2}$ for $%
\kappa =1$.

The fiber bundles involved are not trivial. Since the problem is global,
this is another reason why we choose the above definition of control system.

Our main result, Theorem \ref{TeoP}, asserts the following: For Euclidean
space, the system is controllable if and only if $\alpha \neq 0$. In the
hyperbolic case, the system is controllable for all $\alpha $, while in the
spherical case it is controllable if and only if $\alpha \neq \pm 1$; if $%
\alpha =\pm 1$, an admissible curve consists of great circles in a Hopf
fibration. The precise statement and the proof can be found in Section \ref%
{S2}.

Section \ref{Kendall} addresses a related problem: Given a family $\mathcal{F%
} $ of distinguished curves in a manifold $N$, to find the minimum number of
pieces in $\mathcal{F}$ of continuous curves in $N$ joining two arbitrary
points in $N$, which we call the \textbf{Kendall number} of $\mathcal{F}$.
In fact, this is a problem of the sort David Kendall used to pose to his
students in Oxford in the mid-20th century for the system of a sphere
rolling on the plane without slipping and spinning (that is, $N$ is the five
dimensional manifold of all positions of a sphere resting on a plane) and
the family consists of curves in $N$ determined by rolling along straight
lines. It was solved by John Hammersley in \cite{Oxford}, as a part of a
book dedicated to Kendall for his sixty-fifth birthday (see also Section 4
of Chapter 4 in \cite{Jurd}, where the problem is referred to as the \textbf{%
	\ Oxford problem} and \cite{Biscolla} for a more geometric approach).

In our context we can propose two analogues: for the family $\mathcal{P}%
^{\alpha }$ of curves in $\mathcal{G}_{0}=\mathcal{L}$ describing helicoids
with angular speed $\alpha $, and the family $\mathcal{H}^{\alpha }$ of $%
\alpha $-admissible homogeneous curves in $\mathcal{L}$, that is, those $%
\alpha $-admissible curves
which are orbits of monoparametric groups of rigid transformations. We find
the Kendall numbers for both families.

We would like to thank Yamile Godoy and Eduar\-do Hulett for helpful suggestions.

\section{The $\protect\alpha$-helicoidal control system\label{S2}}

For $\kappa \in\left\{0,1,-1\right\}$, let $M_{\kappa }$ be the space form
of dimension three with constant Gaussian curvature $\kappa $, that is, $%
M_{0}=\mathbb{R}^{3}$, $M_{1}=S^{3}$ and $M_{-1}=H^{3}$. Let $\mathcal{G}%
_{\kappa }$ be the space of all complete oriented geodesics in $M_{\kappa }$
up to parametrizations, i.e., 
\begin{equation*}
	\mathcal{G}_{\kappa }=\{\left[ \sigma \right] \mid \sigma :\mathbb{R}
	\rightarrow M_{\kappa }\text{ is a unit speed geodesic in $M_{\kappa }$}\} 
	\text{,}
\end{equation*}%
where $\sigma _{1}\sim \sigma _{2}$ if $\sigma _{1}\left( t\right) =\sigma
_{2}\left( t+t_{o}\right) $ for all $t$ and some $t_{o}\in \mathbb{R}$.

The isometry group of $M_{\kappa }$ acts transitively on $\mathcal{G}%
_{\kappa }$ and this induces on it a differentiable structure of dimension
four, that renders it diffeomorphic to $TS^{2}$ for $\kappa =0,-1$ \cite{BLP}%
, and $S^{2}\times S^{2}$ for $\kappa =1$ (see Proposition \ref{CS2xS2}).
More precisely, for $\kappa =0$, the map%
\begin{equation}
	\psi :TS^{2}=\left\{ \left( v,u\right) \in S^{2}\times \mathbb{R}^{3}\mid
	u\bot v\right\} \rightarrow \mathcal{L}\text{, \ \ \ \ }\psi \left(
	v,u\right) =\left[ s\mapsto u+sv\right] \text{,}  \label{difeoTS2L}
\end{equation}%
is a diffeomorphism ($v^{\bot }\cong T_{v}S^{2}$). It holds that $\psi ^{-1}%
\left[ s\mapsto u+sv\right] =\left( v,u-\left\langle u,v\right\rangle
v\right) $ (here $u$ is not necessarily orthogonal to $v$).

Before presenting the control system that concerns us, we need the following
definitions. We denote by $\gamma _{v}$ the geodesic in $M_{\kappa }$ with
initial velocity $v$.

\begin{definition}
	\label{philpA}Let $\kappa \in \left\{ 0,1,-1\right\} $ and $\alpha \in 
	\mathbb{R}$. Given $\ell \in \mathcal{G}_{\kappa }$, $p\in \ell $ and a unit
	vector $A\in T_{p}M_{\kappa }$ orthogonal to $\ell $, the $\alpha $\textbf{%
		-helicoidal parametrized surface with initial ray} $\ell $ \textbf{and axis} 
	$\gamma _{A}$, 
	\begin{equation*}
		\phi _{\ell ,p,A}^{\alpha }:\mathbb{R}^{2}\rightarrow M_{\kappa }\text{,}
	\end{equation*}%
	is defined as follows: Suppose that $\ell =\left[ \sigma \right] $ with $%
	\sigma \left( 0\right) =p$ and let $B=A\times \sigma ^{\prime }\left(
	0\right) $. Then 
	\begin{equation}
		\phi _{\ell ,p,A}^{\alpha }\left( s,t\right) =\gamma _{\cos \left( \alpha
			t\right) V_{t}+\sin \left( \alpha t\right) B_{t}}\left( s\right) \text{,}
		\label{phiAlpha}
	\end{equation}%
	where $t\mapsto V_{t}$ and $t\mapsto B_{t}$ are the parallel vector fields
	along $\gamma _{A}$ with initial values $\sigma ^{\prime }\left( 0\right) $
	and $B$, respectively. See Figure \ref{fig:Helicoide}.
\end{definition}

In other words, the axis begins at $p\in \ell $ with initial velocity $A$
perpendicular to $\ell $, and the rays rotate with constant angular speed $%
\alpha $ as they move along the axis with unit speed.

\begin{figure}[ht!]	
	\centerline{
		\includegraphics[width=2in]
		{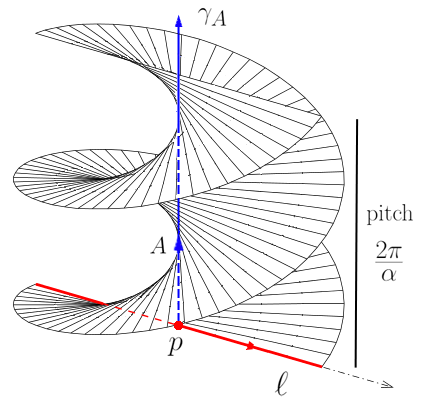}
	}
	\caption{The surface $\phi _{\ell ,p,A}^{\alpha }$ in the Euclidean case}
	\label{fig:Helicoide}
\end{figure}

\begin{definition}
	\label{def}Let $\kappa \in\left\{0,1,-1\right\}$ and $\alpha \in \mathbb{R}$%
	. Given $\ell $, $p$ and $A$ as above, we define the\textbf{\ }$\alpha $%
	\textbf{-helicoidal curve with initial ray }$\ell $ \textbf{and axis }$%
	\gamma _{A}$ as 
	\begin{equation}
		\Gamma _{\ell ,p,A}^{\alpha }:\mathbb{R}\rightarrow \mathcal{G}_{\kappa } 
		\text{,\ \ \ \ \ \ \ \ }\Gamma _{\ell ,p,A}^{\alpha }\left( t\right) =\left[
		s\mapsto \phi _{\ell ,p,A}^{\alpha }\left( s,t\right) \right] \text{,}
		\label{GammaGral}
	\end{equation}
	and the subset $\mathcal{A}_{\kappa }^{\alpha }\subset T\mathcal{G}_{\kappa
	} $ by 
	\begin{equation*}
		\mathcal{A}_{\kappa }^{\alpha }=\left\{ \text{\emph{initial velocities of} }
		\alpha \text{\emph{-helicoidal curves in} }\mathcal{G}_{\kappa }\right\} 
		\text{.}
	\end{equation*}
\end{definition}

We call the elements of this set $\alpha $\textbf{-admissible} tangent
vectors.

Now we are in the position of defining the $\alpha $-helicoidal control
system on $\mathcal{G}_{\kappa }$, that we present in the following
proposition.

\begin{proposition}
	\label{fibrado}Let $\kappa \in\left\{0,1,-1\right\}$ and $\alpha \in \mathbb{%
		R}$. The canonical projection $\mathcal{A} _{\kappa }^{\alpha }\rightarrow 
	\mathcal{G}_{\kappa }$ is a fiber bundle. Moreover, the inclusion $\iota
	_{\kappa }^{\alpha }:\mathcal{A}_{\kappa }^{\alpha }\rightarrow T\mathcal{G}%
	_{\kappa }$ is a fiber subbundle and this gives the control system 
	\begin{equation*}
		\begin{array}{ccc}
			\mathcal{A}_{\kappa }^{\alpha } & \overset{\iota _{\kappa }^{\alpha }}{
				\longrightarrow } & T\mathcal{G}_{\kappa } \\ 
			& \searrow & \downarrow \pi \\ 
			&  & \mathcal{G}_{\kappa }\text{.}%
		\end{array}%
	\end{equation*}
\end{proposition}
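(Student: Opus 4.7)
My plan is to exploit the equivariance of the entire construction under the isometry group $G_{\kappa}$ of $M_{\kappa}$, ultimately realizing $\mathcal{A}_{\kappa}^{\alpha}$ as a single orbit of $G_{\kappa}$ acting on $T\mathcal{G}_{\kappa}$. Because isometries preserve both geodesics and parallel transport, one has
\[
    g\circ \phi^{\alpha}_{\ell,p,A} = \phi^{\alpha}_{g\cdot\ell,\, g(p),\, dg_{p}(A)},
    \qquad
    g\cdot \Gamma^{\alpha}_{\ell,p,A} = \Gamma^{\alpha}_{g\cdot\ell,\, g(p),\, dg_{p}(A)},
\]
for every $g\in G_{\kappa}$. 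Differentiating at $t=0$ shows that the natural action of $G_{\kappa}$ on $T\mathcal{G}_{\kappa}$ preserves $\mathcal{A}_{\kappa}^{\alpha}$.

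Next, I would introduce the smooth manifold of pointed helicoidal data
\[
    P_{\kappa}=\{(\ell,p,A)\mid \ell\in\mathcal{G}_{\kappa},\; p\in\ell,\; A\in T_{p}M_{\kappa},\; |A|=1,\; A\perp \ell\},
\]
which is a fiber bundle over $\mathcal{G}_{\kappa}$ with fiber $\ell\times S^{1}$ (a cylinder when $\kappa=0,-1$ and a torus when $\kappa=1$) and on which $G_{\kappa}$ acts transitively. The map $\Phi:P_{\kappa}\to T\mathcal{G}_{\kappa}$ defined by $\Phi(\ell,p,A)=\frac{d}{dt}\bigl|_{t=0}\Gamma^{\alpha}_{\ell,p,A}(t)$ is smooth, $G_{\kappa}$-equivariant, and its image is, by Definition \ref{def}, precisely $\mathcal{A}_{\kappa}^{\alpha}$. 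Since $G_{\kappa}$ acts transitively on $P_{\kappa}$, the image $\mathcal{A}_{\kappa}^{\alpha}$ is a single $G_{\kappa}$-orbit in $T\mathcal{G}_{\kappa}$; orbits of a smooth Lie group action with closed stabilizers are embedded submanifolds, which endows $\mathcal{A}_{\kappa}^{\alpha}$ with its manifold structure. The canonical projection $\pi$ to $\mathcal{G}_{\kappa}$ is then $G_{\kappa}$-equivariant and surjective between the two transitive $G_{\kappa}$-spaces $\mathcal{A}_{\kappa}^{\alpha}$ and $\mathcal{G}_{\kappa}$, and local sections of the orbit map $G_{\kappa}\to\mathcal{G}_{\kappa}$ produce local trivializations of $\pi$, exhibiting it as a locally trivial fiber bundle. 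That $\iota_{\kappa}^{\alpha}$ is a fiber subbundle is then automatic: by construction it is fiber-preserving, and it is the restriction of the identity on $T\mathcal{G}_{\kappa}$, so smoothness and fiberwise embedding are inherited.

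The main obstacle I anticipate is controlling the dimension of the fibers of $\pi$, since the pair $(p,A)$ is partially redundant whenever translating $p$ along $\ell$ does not alter the initial velocity; this already occurs in the Euclidean case $\alpha=0$, where the fiber over $\ell_{0}$ degenerates from the cylinder $\ell_{0}\times S^{1}$ to a single circle. The orbit viewpoint sidesteps this issue uniformly: each fiber is the $\mathrm{Stab}(\ell_{0})$-orbit of any chosen initial velocity over $\ell_{0}$, automatically a homogeneous space whose dimension is constant in $\ell_{0}$ by transitivity of $G_{\kappa}$ on $\mathcal{G}_{\kappa}$. Only a brief computation remains, to confirm that $\Phi$ has locally constant rank (equivalently, to identify which infinitesimal elements of $\mathrm{Stab}(\ell_{0})$ fix a given $X_{0}\in\pi^{-1}(\ell_{0})$), which is straightforward in each of the three geometries using the identifications $\mathcal{G}_{\kappa}\cong TS^{2}$ for $\kappa=0,-1$ and $\mathcal{G}_{1}\cong S^{2}\times S^{2}$.
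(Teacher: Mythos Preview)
Your approach is essentially the same as the paper's: both identify $\mathcal{A}_{\kappa}^{\alpha}$ as a single $G_{\kappa}$-orbit in $T\mathcal{G}_{\kappa}$ by observing that isometries carry $\alpha$-helicoidal curves to $\alpha$-helicoidal curves and that $G_{\kappa}$ acts transitively on the initial data $(\ell,p,A)$, and then invoke standard homogeneous-space theory to conclude the fiber-bundle structure. The paper's version is simply much terser (it does not introduce the auxiliary manifold $P_{\kappa}$ or spell out the local trivializations), and your final paragraph about checking the rank of $\Phi$ is unnecessary: once $\mathcal{A}_{\kappa}^{\alpha}$ is realized as a $G_{\kappa}$-orbit, its manifold and bundle structure come directly from $G_{\kappa}/H\to G_{\kappa}/K_{\kappa}$ and no further rank computation is required.
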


We will call the admissible curves of this system 
$\alpha $\textbf{-admissible curves}.

\begin{remark}
	Although the curves $\Gamma _{\ell ,p,A}^{\alpha }$ are orbits of elements
	of $\mathcal{G}_{\kappa }$ under monoparametric groups of isometries of $%
	M_{\kappa }$ \emph{(}conjugate one to another\emph{)}, no vector field on $%
	\mathcal{G}_{\kappa }$ induced by elements of the Lie algebra of \emph{Iso}$%
	\,\left( M_{\kappa }\right) $ is a section of the fiber bundle $\mathcal{A}
	_{\kappa }^{\alpha }\rightarrow \mathcal{G}_{\kappa }$.
\end{remark}

The following proposition reinforces the idea that the problem has a global
nature and suggests the convenience of working in an invariant setting.

\begin{proposition}
	\label{fibratrivial}Let $\kappa \in\left\{0,1,-1\right\}$. If $\alpha
	^{2}\neq \kappa $, the fiber bundle $\mathcal{A}_{\kappa }^{\alpha }$ over $%
	\mathcal{G}_{\kappa }$ is not topologically trivial, that is, the manifold $%
	\mathcal{A}_{\kappa }^{\alpha }$ is not homeomorphic to $\mathcal{G}_{\kappa
	}\times \mathcal{F} _{\kappa }^{\alpha }$, where $\mathcal{F}_{\kappa
	}^{\alpha }$ is the typical fiber of $\mathcal{A}_{\kappa }^{\alpha
	}\rightarrow \mathcal{G} _{\kappa }$.
\end{proposition}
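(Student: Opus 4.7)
My plan is to distinguish $\mathcal{A}_{\kappa}^{\alpha}$ from the hypothetical product $\mathcal{G}_{\kappa}\times\mathcal{F}_{\kappa}^{\alpha}$ by computing and comparing their fundamental groups. The first step is to set up a natural surjection onto $\mathcal{A}_{\kappa}^{\alpha}$ from the Stiefel bundle of orthonormal $2$-frames
\begin{equation*}
V_{2}(TM_{\kappa})=\{(p,v,A)\in TM_{\kappa}\oplus TM_{\kappa}:|v|=|A|=1,\ v\perp A\},
\end{equation*}
namely $\Phi(p,v,A)=\Gamma_{[\gamma_{v}],p,A}^{\alpha}{}^{\prime}(0)$. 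Identifying this initial velocity with the normal Jacobi field along $[\gamma_{v}]$ determined by $J(0)=A$ and $J^{\prime}(0)=\alpha(A\times v)$, and using the explicit linear, trigonometric, or hyperbolic solutions of the Jacobi equation in constant curvature $\kappa$, I would verify that, whenever $\alpha^{2}\neq\kappa$, the map $\Phi$ is a diffeomorphism for $\kappa=0,-1$, while for $\kappa=1$ it is a free $2$-to-$1$ cover whose deck involution is $(p,v,A)\mapsto(-p,-v,-A)$, coming from the antipodal map on $S^{3}$.

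Next, parallelizability of $M_{\kappa}$ (the Euclidean and hyperbolic spaces are contractible, and $S^{3}=SU(2)$ is a Lie group) yields $V_{2}(TM_{\kappa})\cong M_{\kappa}\times V_{2}(\mathbb{R}^{3})\cong M_{\kappa}\times SO(3)$. In the spherical case, the left $SU(2)$-trivialization turns the deck involution into the antipodal map on $S^{3}$ combined with the identity on $SO(3)$, so $\mathcal{A}_{1}^{\alpha}\cong\mathbb{RP}^{3}\times SO(3)$. Consequently $\pi_{1}(\mathcal{A}_{\kappa}^{\alpha})=\mathbb{Z}/2$ for $\kappa=0,-1$ and $\pi_{1}(\mathcal{A}_{1}^{\alpha})=\mathbb{Z}/2\oplus\mathbb{Z}/2$. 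On the other hand $\mathcal{G}_{\kappa}$ is simply connected in all three cases, and the typical fiber $\mathcal{F}_{\kappa}^{\alpha}$ is the unit normal bundle of a geodesic---topologically $\mathbb{R}\times S^{1}$ when $\ell$ is a line and a torus when $\ell$ is a great circle---so $\pi_{1}(\mathcal{G}_{\kappa}\times\mathcal{F}_{\kappa}^{\alpha})$ is free abelian, namely $\mathbb{Z}$ or $\mathbb{Z}^{2}$, incompatible with the finite groups above.

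The main obstacle is the first step: carrying out the Jacobi field computation in each space form. The condition $\alpha^{2}\neq\kappa$ enters as the non-degeneracy condition that prevents a continuous family of distinct $(p,A)$ from mapping to the same Jacobi field; the residual antipodal identification in the spherical case is the only remaining ambiguity, and one must confirm that it is genuinely $2$-to-$1$ and not of higher multiplicity by an equivariance argument ($\Phi$ commutes with the isometry group action on $V_{2}(TM_{\kappa})$, so the fiber cardinality is computed once and for all at a convenient base point).
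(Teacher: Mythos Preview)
Your proposal is correct and shares the paper's overall strategy: identify $\mathcal{A}_{\kappa}^{\alpha}$ with a known space and distinguish it from $\mathcal{G}_{\kappa}\times\mathcal{F}_{\kappa}^{\alpha}$ by comparing fundamental groups, arriving at the same answers ($\mathbb{Z}/2$ versus $\mathbb{Z}$ for $\kappa=0,-1$, and $\mathbb{Z}/2\oplus\mathbb{Z}/2$ versus $\mathbb{Z}^{2}$ for $\kappa=1$). The difference lies only in how the identification is established. The paper works group-theoretically: it shows (Proposition~\ref{hachek}(c)) that $G_{\kappa}$ acts simply transitively on $\mathcal{A}_{\kappa}^{\alpha}$ for $\kappa=0,-1$ by a matrix isotropy computation, and for $\kappa=1$ it passes to the double cover $S^{3}\times S^{3}\to SO(4)$ and computes the stabilizer of $X_{\alpha}$ to be $\{(\pm1,\pm1)\}$, yielding $\mathcal{A}_{1}^{\alpha}\cong\mathbb{RP}^{3}\times\mathbb{RP}^{3}$. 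Your route is the geometric repackaging of the same facts: since $V_{2}(TM_{\kappa})$ is the oriented orthonormal frame bundle of $M_{\kappa}$, it equals $G_{\kappa}$ for $\kappa=0,-1$ and $SO(4)$ for $\kappa=1$, and your map $\Phi$ is precisely the orbit map $g\mapsto dg_{\ell_{o}}(X_{\alpha})$; your Jacobi-field injectivity argument is then the geometric form of the paper's isotropy calculation. What your packaging buys is a uniform picture across the three curvatures and a direct link to the paper's own Jacobi-field description in Subsection~\ref{Examples}; what the paper's version buys is that the nondegeneracy condition $\alpha^{2}\neq\kappa$ appears transparently as the invertibility of a $2\times2$ linear system. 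One small slip to fix when you write it up: in the spherical case the fiber over $\ell$ is not the unit normal bundle of $\ell$ itself but its quotient by your deck involution $(p,A)\mapsto(-p,-A)$; this quotient is again a torus, so your $\pi_{1}$ comparison survives unchanged.
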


\begin{example}
	\label{Ejemplos} \emph{a)} The curves $\Gamma _{\ell ,p,A}^{\alpha }$, i.e. {%
		\ the }$\alpha $-helicoidal {curves, are clearly }$\alpha $-admissible.
	
	\smallskip
	
	\emph{b)} The homogeneous $\alpha $-admissible curves in the Euclidean case
	are characterized in Proposition \ref{homog}. Among them, the curve of
	straight lines that sweeps a one-sheet hyperboloid is admissible for the
	control system $\left( \iota _{0}^{\alpha },\mathcal{A}_{0}^{\alpha }\right) 
	$, for suitable parameters \emph{(}see the paragraph after that proposition\emph{)}. 
	This also holds for analogous surfaces in $H^{3}$ and $S^{3}$.
	
	\smallskip
	
	\emph{c)} The curve in $\mathcal{L}$ associated with a circular helicoid
	with angular velocity $\alpha \ne 0 $ is not $\alpha $-admissible. We recall
	that this parametrized surface can be built in an analogous manner as $\phi
	_{\ell ,p,A}^{\alpha }$, but taking a unit speed circle $c$ with initial
	velocity $A$, centered at a point on $\ell $, instead of $\gamma _{A}$, and
	using the normal connection of $c$ to rotate $\ell $ along it, with angular
	velocity $\alpha $. See Proposition \ref{HelCirc}.
\end{example}

\medskip

Now we can state our main result. We recall that a submanifold of a vector
space is said to be \textbf{substantial} if is not included in any affine
subspace. The \textbf{Hopf fibrations }of $S^{3}$ are those fibrations by
oriented great circles that are congruent by isometries of the sphere to the
fibration whose fibers are intersections of $S^{3}$ with complex lines,
identifying $\mathbb{R}^{4}\equiv \mathbb{C}^{2}$.

\begin{theorem}
	\label{TeoP}Let $\alpha \in \mathbb{R}$. For $\kappa
	\in\left\{0,1,-1\right\} $, the following assertions are equivalent:
	
	\smallskip
	
	\emph{a)} The control system $\left( \mathcal{A}_{\kappa }^{\alpha },\iota
	_{\kappa }^{\alpha }\right) $ is controllable.
	
	\smallskip
	
	\emph{b)} It holds that $\alpha ^{2}\neq \kappa $.
	
	\smallskip
	
	\emph{c)} For every $\ell \in \mathcal{G}_{\kappa }$, the fiber of $\mathcal{%
		\ A}_{\kappa }^{\alpha }$ over $\ell $ is a substantial submanifold of $%
	T_{\ell }\mathcal{G}_{\kappa }$.
	
	\smallskip
	
	Moreover, in the Euclidean case, the image of a $0$-admissible curve
	consists of parallel straight lines and in the spherical case, if $\alpha
	=\pm 1$, the image of an admissible curve consists of great circles in a
	Hopf fibration.
\end{theorem}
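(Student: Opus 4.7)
The plan is to work throughout with the standard identification of $T_\ell\mathcal{G}_\kappa$ with the four-dimensional space of Jacobi fields $J$ along a unit-speed representative $\sigma$ of $\ell$ satisfying $\langle J,\sigma'\rangle\equiv 0$. The first step is to read off the Jacobi field that represents the initial velocity of $\Gamma^\alpha_{\ell,p,A}$: differentiating \eqref{phiAlpha} at $t=0$ and using the symmetry of the Levi-Civita connection, one finds that if $p=\sigma(s_0)$ and $B=A\times\sigma'(s_0)$, then the corresponding Jacobi field $J_{p,A}$ is the unique one with $J(s_0)=A$ and $J'(s_0)=\alpha B$. Taking a parallel orthonormal frame $(e_1,e_2)$ of the normal bundle of $\sigma$ and writing $A=\cos\theta\,e_1(s_0)+\sin\theta\,e_2(s_0)$, the Jacobi equation $J''+\kappa J=0$ yields a closed-form expression for $(J(0),J'(0))\in\R^4$ in terms of $(s_0,\theta)$, using linear, trigonometric or hyperbolic elementary solutions in the three cases $\kappa=0,1,-1$.

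For the equivalence (b)$\,\Leftrightarrow\,$(c) I would then carry out a direct case analysis of this parametrization. In the Euclidean case $\alpha=0$ forces $J'(0)=0$ and the fiber degenerates to the unit circle in the two-plane $\{J'(0)=0\}$; for $\alpha\neq 0$ one checks that varying $s_0$ and $\theta$ produces four affinely independent vectors. In the spherical case the trigonometric identities make the two parameters collapse into a single variable $\phi=s_0\mp\theta$ precisely when $\alpha=\pm 1$, trapping the fiber in the two-plane $\{J'(0)=\pm R_{\pi/2}\,J(0)\}$; for $\alpha^2\neq 1$ one verifies that the affine span is all of $\R^4$. In the hyperbolic case substitution of $\cosh$ and $\sinh$ never produces such a degeneracy. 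Thus substantiality fails exactly when $\alpha^2=\kappa$.

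For (c)$\,\Rightarrow\,$(a) the strategy is to invoke the Sussmann orbit theorem. Since the fiber at $\ell$ is invariant under $A\mapsto -A$, one has $-\mathcal{A}^\alpha_{\kappa,\ell}=\mathcal{A}^\alpha_{\kappa,\ell}$; a symmetric subset has coinciding linear and affine spans, so substantiality gives $\mathrm{span}\,\mathcal{A}^\alpha_{\kappa,\ell}=T_\ell\mathcal{G}_\kappa$. Each curve $\Gamma^\alpha_{\ell,p,A}$ is an orbit through $\ell$ of a one-parameter subgroup of $\mathrm{Iso}(M_\kappa)$, so the admissible vector fields form a family of Killing fields on $\mathcal{G}_\kappa$ to which the orbit theorem applies. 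The orbit of $\ell$ is therefore an immersed submanifold whose tangent space at $\ell$ contains $\mathrm{span}\,\mathcal{A}^\alpha_{\kappa,\ell}$, hence has full dimension and is open in $\mathcal{G}_\kappa$. The $\mathrm{Iso}(M_\kappa)$-invariance of the system together with transitivity on $\mathcal{G}_\kappa$ implies that every orbit is open, and the connectedness of $\mathcal{G}_\kappa$ then forces a single orbit, establishing controllability.

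Finally, to handle the failure of controllability when $\alpha^2=\kappa$ together with the ``moreover'' statement, the degenerate linear constraint on the fiber has to be integrated to a preserved geometric structure. When $\kappa=0$ and $\alpha=0$, the constraint $J'(0)=0$ says that the direction $\sigma'$ is infinitesimally preserved, which integrates to the fact that every admissible curve consists of mutually parallel straight lines, a two-dimensional subset of $\mathcal{L}$. When $\kappa=1$ and $\alpha=\pm 1$, the constraint $J'(0)=\pm R_{\pi/2}\,J(0)$ turns out to be precisely the tangent space of the submanifold of $\mathcal{G}_1$ consisting of all fibers of a fixed Hopf fibration; identifying $\R^4\cong\mathbb{C}^2$, this corresponds to the infinitesimal version of the $e^{\pm it}$ action on $S^3$ whose orbits are the Hopf fibers. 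Checking integrability of this distribution and matching it with a Hopf fibration is the step I anticipate to require the most care; a natural route is to realize the distribution as the tangent field of an explicit $S^1$-action on $\mathcal{G}_1$ that preserves the Hopf family. Once this is done, admissible curves are confined to a two-dimensional subset of $\mathcal{G}_1$ and controllability fails in both degenerate cases.
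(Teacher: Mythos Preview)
Your overall strategy mirrors the paper's: establish (b)$\Leftrightarrow$(c) by explicit computation of the fiber, derive (c)$\Rightarrow$(a) from Sussmann's orbit theorem together with the symmetry $-\mathcal{A}^\alpha_{\kappa,\ell}=\mathcal{A}^\alpha_{\kappa,\ell}$, and handle (a)$\Rightarrow$(b) and the ``moreover'' clause by integrating the linear constraint on the fiber in the degenerate cases. Where you use Jacobi fields, the paper uses the Lie-algebraic identification $T_{\ell_o}\mathcal{G}_\kappa\cong\mathfrak{p}_\kappa$ and the $\mathrm{Ad}(K_\kappa)$-orbit of $\xi_\alpha$; both routes lead to the same trigonometric case analysis, and in fact the paper carries out the Jacobi-field description separately in its Examples subsection.

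There is, however, a genuine gap in your (c)$\Rightarrow$(a) step. You propose to apply Sussmann's theorem to the family of Killing fields generating the helicoidal one-parameter groups, asserting that ``the admissible vector fields form a family of Killing fields.'' But these Killing fields are \emph{not} sections of $\mathcal{A}^\alpha_\kappa$: the paper's Remark after Proposition~\ref{fibrado} states this explicitly, and concretely, the orbit of a fixed one-parameter group $\exp(t\,\mathrm{Ad}(g)\xi_\alpha)$ through a point $\ell'\neq g\ell_o$ is in general \emph{not} $\alpha$-admissible (see Proposition~\ref{homog}, where admissibility of such an orbit imposes nontrivial conditions on $\ell'$). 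Consequently the Sussmann orbit of your Killing-field family is obtained by concatenating flows that need not be admissible curves, so showing that this orbit is all of $\mathcal{G}_\kappa$ does not by itself give controllability of the original control system. The paper circumvents this by taking as its family the \emph{local smooth sections} of the bundle $\mathcal{A}^\alpha_\kappa\to\mathcal{G}_\kappa$ over a trivializing cover; integral curves of these are automatically admissible, and Sussmann's theorem then applies directly to the reachable set. An alternative repair, closer to your intuition, would be to lift to a left-invariant control system on $G_\kappa$ with control set $\mathrm{Ad}(K_\kappa)\xi_\alpha\subset\mathfrak{p}_\kappa$ and argue via the Lie subalgebra it generates.

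For the spherical ``moreover'' statement, the paper's argument is more direct than the integrability check you anticipate: using the diffeomorphism $\mathcal{G}_1\cong S^2\times S^2$ of Proposition~\ref{CS2xS2}, it shows that for $\alpha=\pm 1$ the fiber of $\mathcal{A}^\alpha_1$ lies entirely in one $TS^2$-factor, so one $S^2$-coordinate of any admissible curve is constant; the identification of the slices $\{z\}\times S^2$ and $S^2\times\{z\}$ with Hopf fibrations is then Proposition~\ref{HopfFactor}.
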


\subsection{Space of oriented geodesics}

We begin by setting some notations for the three dimensional space forms. In
general, we deal with the three cases simultaneously, but the spherical case
will need partly a differentiated approach (see Subsection \ref{SphericalC}).

From now on, $\left\{ e_{0},e_{1},e_{2},e_{3}\right\}$ denotes the canonical
basis of $\mathbb{R}^{4}$. For $\kappa \in\left\{0,1,-1\right\}$, let $%
M_{\kappa }$ be the three dimensional space form with Gaussian curvature $%
\kappa $, that is, $M_{0}=\mathbb{R}^{3}$ and for $\kappa =\pm 1$, $%
M_{\kappa }$ is the connected component of $e_{0}$ of $\left\{ x\in \mathbb{R%
}^{4}:\left\langle x,y\right\rangle _{\kappa }=\kappa \right\} $, where 
\begin{equation}
	\left\langle x,y\right\rangle _{\kappa }=\kappa
	x_{0}y_{0}+x_{1}y_{1}+x_{2}y_{2}+x_{3}y_{3}\text{,}  \label{pik}
\end{equation}%
that induces a Riemannian metric on $M_{\kappa }$. That is, $M_{1}$ is the
sphere $S^{3}$ and $M_{-1}$ is hyperbolic space $H^{3}$. To handle the three
cases simultaneously, sometimes it will be convenient to identify $\mathbb{R}%
^{3}\equiv e_{0}+\mathbb{R}^{3}=\left\{ p\in \mathbb{R}^{4}:p_{0}=1\right\} $%
.

We denote $G_{\kappa }=$ Iso$_{o}\left( M_{\kappa }\right) $, the identity
component of the isometry group on $M_{\kappa }$. Let $O\left( 4\right) $
and $O\left( 1,3\right) $ be the automorphism groups of the inner products $%
\left\langle ,\right\rangle _{1}$ and $\left\langle ,\right\rangle _{-1}$,
respectively. With the identification $\mathbb{R}^{3}\equiv e_{0}+\mathbb{R}%
^{3}$, it holds that%
\begin{align}
	G_{0}& =\left\{ \left( 
	\begin{array}{cc}
		1 & 0 \\ 
		a & A%
	\end{array}
	\right) :a\in \mathbb{R}^{3}\text{, }A\in SO\left( 3\right) \right\} \text{,}
	\label{isomk} \\
	G_{1}& =SO\left( 4\right) =\left\{ A\in O\left( 4\right) :{\mathrm{det}}
	A=1\right\} \text{,}  \notag \\
	G_{-1}& =O_{o}\left( 1,3\right) =\left\{ A\in O\left( 1,3\right) :{\mathrm{%
			det}} A=1 \text{, }\left( Ae_{0}\right) _{0}>0\right\} \text{.}  \notag
\end{align}

Given an orthonormal subset $\left\{ u,v\right\} $ of $T_{p}M_{\kappa }$,
the \textbf{cross product} $u\times v$ is defined as the unique unit vector $%
w$ such that $\left\{ u,v,w\right\} $ is a positively oriented orthogonal
basis of $T_{p}M_{\kappa }$, that is, $\left\{ p,u,v,w\right\} $ is a
positively oriented orthogonal basis of $\left( \mathbb{R}^{4},\left\langle
,\right\rangle _{\kappa }\right) $. For instance, $e_{1}\times e_{2}=e_{3}$.
It can be extended bilinearly to $T_{p}M_{\kappa } \times T_{p}M_{\kappa }$.

Next we recall some properties of the space $\mathcal{G}_{\kappa }$ of
oriented geodesics in $M_{\kappa }$. Their geometry for $\kappa =0,-1$ has
been studied for instance in \cite{GK, SalvaiE, GG, SalvaiH}; for $\kappa =1$
see Subsection \ref{SphericalC}. The isometry group $G_{\kappa }$ acts
transitively on $\mathcal{G}_{\kappa }$ through $g\cdot \left[ \sigma \right]
=\left[ g\circ \sigma \right] $. By abuse of notation, we say that a point $p$
is in $\ell \in \mathcal{G}_{\kappa }$ if for some parametrization $\sigma $
of $\ell $ there exists $s_{o}$ such that $p=\sigma \left( s_{o}\right) $.

We introduce the notation%
\begin{equation*}
	\sin _{1}(r)=\sin r\text{, \ \ }\sin _{0}(r)=r\text{, \ \ }\sin
	_{-1}(r)=\sinh r\text{,\ \ \ }\cos _{\kappa }=\sin _{\kappa }^{\prime }
\end{equation*}%
($\kappa \in \left\{ 0,1,-1\right\} $) and define the geodesic $\sigma _{o}$
in $M_{\kappa }$ and the corresponding element $\ell _{o}$ of $\mathcal{G}%
_{\kappa }$ by%
\begin{equation}
	\sigma _{o}\left( s\right) =\cos _{\kappa }s\ e_{0}+\sin _{\kappa }s\ e_{1}%
	\text{ \ \ \ \ \ \ and \ \ \ \ \ \ }\ell _{o}=\left[ \sigma _{o}\right] 
	\text{.}  \label{sigmacero}
\end{equation}

It will be convenient for us to present $\mathcal{G}_{\kappa }$ explicitly
as a homogeneous space. For $B$, $C\in \mathbb{R}^{2\times 2}$, we denote by
diag$\ \left( B,C\right) \ $the $4\times 4$ matrix with blocks $A$ and $B$
in the main diagonal. We have:

\begin{proposition}
	\emph{\cite{magneticas} }The isotropy subgroup of $G_{\kappa }$ at $\ell
	_{o} $ is $K_{\kappa }=\left\{ k\left( s,t\right) :s,t\in \mathbb{R}\right\} 
	$, where 
	\begin{equation}
		k\left( s,t\right) =\text{\emph{diag}\thinspace }\left( R_{\kappa }\left(
		s\right) ,R_{1}\left( t\right) \right) \text{, \ \ \ \ \ \ with\ \ \ \ \ \ \ 
		}R_{\kappa }\left( t\right) =\left( 
		\begin{array}{cc}
			\cos _{\kappa }t & -\kappa \sin _{\kappa }t \\ 
			\sin _{\kappa }t & \cos _{\kappa }t%
		\end{array}
		\right) \text{.}  \label{isotropiaDeG}
	\end{equation}
\end{proposition}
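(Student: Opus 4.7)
The plan is to verify both inclusions: $K_\kappa$ is contained in the isotropy of $\ell_o$ in $G_\kappa$, and conversely every element of that isotropy has the form $k(s,t)$.

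First, I would check that each $k(s,t) \in G_\kappa$. The matrix is block-diagonal, with $R_\kappa(s)$ acting on $\mathrm{span}\{e_0,e_1\}$ and $R_1(t)$ acting on $\mathrm{span}\{e_2,e_3\}$. Each block preserves the restriction of $\langle\cdot,\cdot\rangle_\kappa$ to its plane (a direct check using $\cos_\kappa^2 r + \kappa\sin_\kappa^2 r = 1$) and has determinant one. For $\kappa=-1$, the zeroth coordinate of $k(s,t)\,e_0$ is $\cosh s>0$, placing the matrix in the identity component $O_o(1,3)$; for $\kappa=0$, unfolding the block form shows $k(s,t)$ is the rigid motion with translation part $s\,e_1$ and rotation part $\mathrm{diag}(1,R_1(t))\in SO(3)$, matching the pattern in (\ref{isomk}).

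Next I would compute $k(s,t)\,\sigma_o(r)$. Since the last two coordinates of $\sigma_o(r)$ vanish, only the top-left block acts nontrivially, and the addition formulas for $\sin_\kappa$ and $\cos_\kappa$ give $k(s,t)\,\sigma_o(r) = \sigma_o(s+r)$; thus $k(s,t)\cdot\ell_o = \ell_o$. For the reverse inclusion, let $g$ fix $\ell_o$. Then $g\circ\sigma_o$ is a unit-speed orientation-preserving parametrization of the same oriented geodesic, so $g\circ\sigma_o = \sigma_o(\cdot + s_o)$ for some $s_o\in\mathbb{R}$. Setting $h := k(-s_o,0)\,g$, the previous paragraph gives $h\circ\sigma_o = \sigma_o$, hence $h(e_0) = \sigma_o(0) = e_0$ and $h(e_1) = \sigma_o'(0) = e_1$ (using that elements of $G_\kappa$ are linear on $\mathbb{R}^4$, so their differentials coincide with themselves). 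Thus $h$ fixes the nondegenerate plane $\mathrm{span}\{e_0,e_1\}$ pointwise, preserves its $\langle\cdot,\cdot\rangle_\kappa$-orthogonal complement $\mathrm{span}\{e_2,e_3\}$, and acts there as an element of $SO(2)$ by the determinant (and, for $\kappa=-1$, identity-component) condition. So $h = k(0,t)$ for some $t$, whence $g = k(s_o,0)\,k(0,t) = k(s_o,t)$.

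The only real obstacle is carrying the argument uniformly across the three geometries: in particular, confirming that $k(s,t)$ lies in the identity component of the isometry group in the hyperbolic case and has the correct rigid-motion form in the Euclidean case. Once these are in place, the addition formulas handle the second step uniformly, and the third step reduces to the basic fact that a linear isometry of $\mathbb{R}^4$ fixing two $\langle\cdot,\cdot\rangle_\kappa$-orthogonal vectors is determined by its restriction (an element of $SO(2)$) to the perpendicular plane.
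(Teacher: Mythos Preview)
Your argument is correct. Note, however, that the paper does not supply its own proof of this proposition: it is quoted from \cite{magneticas} and stated without justification, so there is nothing in the paper to compare your approach against. Your direct verification (checking $k(s,t)\in G_\kappa$, then $k(s,t)\cdot\sigma_o(r)=\sigma_o(r+s)$ via the addition laws, then reducing a general isotropy element to $k(0,t)$ by precomposing with $k(-s_o,0)$) is the natural elementary route.

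One small point worth tightening in the write-up: the phrase ``nondegenerate plane $\mathrm{span}\{e_0,e_1\}$'' and the appeal to its $\langle\cdot,\cdot\rangle_\kappa$-orthogonal complement are only literally valid for $\kappa=\pm 1$; when $\kappa=0$ the form $\langle\cdot,\cdot\rangle_0$ is degenerate on that plane (indeed $e_0$ is in the radical), and the $\langle\cdot,\cdot\rangle_0$-orthogonal complement of $\{e_0,e_1\}$ is three-dimensional. You clearly anticipated this, since you flag the Euclidean case as needing separate unpacking; just make sure the final version treats $\kappa=0$ by the explicit matrix form in (\ref{isomk}) (so $h(e_0)=e_0$ forces the translation part to vanish and $h(e_1)=e_1$ forces the $SO(3)$ block to fix its first basis vector, hence to equal $\mathrm{diag}(1,R_1(t))$) rather than by the orthogonal-complement language.
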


We consider on $\mathcal{G}_{\kappa }$ the differentiable structure induced
by the bijection%
\begin{equation*}
	F:G_{\kappa }/K_{\kappa }\rightarrow \mathcal{G}_{\kappa }\text{,\ \ \ \ \ \
		\ \ \ \ \ }F\left( gK_{\kappa }\right) =g\cdot \ell_o \text{.}
\end{equation*}

For $\kappa \in\left\{0,1,-1\right\}$ we denote by $\mathfrak{g}_{\kappa }$
the Lie algebra of $G_{\kappa }$. Also from \cite{magneticas} we have%
\begin{equation*}
	\mathfrak{g}_{\kappa }=\left\{ \left( 
	\begin{array}{cc}
		0 & -\kappa x^{T} \\ 
		x & B%
	\end{array}
	\right) :x\in \mathbb{R}^{3}\text{, }B^{T}=-B\right\} \text{.}
\end{equation*}

The Lie algebra of $K_{\kappa }$ is 
\begin{equation*}
	\mathfrak{k}_{\kappa }=\left\{ \text{diag\thinspace }\left( \left( 
	\begin{array}{cc}
		0 & -\kappa t \\ 
		t & 0%
	\end{array}
	\right) ,\left( 
	\begin{array}{cc}
		0 & -s \\ 
		s & 0%
	\end{array}
	\right) \right) :s,t\in \mathbb{R}\right\} \text{.}
\end{equation*}

For column vectors $x,y\in \mathbb{R}^{2}$ we call%
\begin{equation}
	Z(x,y)=\left( 
	\begin{array}{cc}
		0_{2} & (-\kappa x,-y)^{T} \\ 
		(x,y) & 0_{2}%
	\end{array}
	\right) \text{.}  \label{zeta}
\end{equation}

The subspace $\mathfrak{p}_{\kappa }=\left\{ Z(x,y)\in \mathfrak{g}_{\kappa
}:x,y\in \mathbb{R}^{2}\right\} $ of $\mathfrak{g}_{\kappa }$ is an Ad $%
\left( K_{\kappa }\right) $-invariant complement of $\mathfrak{k}_{\kappa }$
and there exists a natural identification 
\begin{equation}
	\left. d\left( F\circ \ \varpi \right) _{I}\right\vert _{\mathfrak{p}%
		_{\kappa }}:\mathfrak{p}_{\kappa }\rightarrow T_{\ell _{o}}\mathcal{G}%
	_{\kappa }\text{,}  \label{identif}
\end{equation}%
where $\varpi :G_{\kappa }\rightarrow G_{\kappa }/K_{\kappa }$ is the
canonical projection.

\subsection{The fiber bundle $\mathcal{A}_{\protect\kappa }^{\protect\alpha %
	}\rightarrow \mathcal{G}_{\protect\kappa }$}

Now we consider a particular case of $\alpha $-helicoidal curve as in (\ref%
{GammaGral}), in good position. Let $\sigma _{o}$ and $\ell _{o}$ be as in (%
\ref{sigmacero}) and let 
\begin{equation}
	p_{o}=e_{0}=\sigma _{o}\left( 0\right) \text{,\ \ \ \ \ \ }A_{o}=e_{3}\text{%
		\ \ \ \ \ \ and\ \ \ \ \ \ }B_{o}=A_{o}\times \sigma _{o}^{\prime }\left(
	0\right) =e_{2}\text{. }  \label{p0A0B0}
\end{equation}%
We call $\Gamma _{o}^{\alpha }$ the curve in $\mathcal{G}_{\kappa }$ defined
by 
\begin{equation}
	\Gamma _{o}^{\alpha }=\Gamma _{\ell _{o},p_{o},A_{o}}^{\alpha }
	\label{gamcer}
\end{equation}%
and denote by $X_{\alpha }$ its initial velocity, that is, 
\begin{equation}
	X_{\alpha }=\left. \tfrac{d}{dt}\right\vert _{0}\ \Gamma _{o}^{\alpha
	}\left( t\right) \in T_{\ell _{o}}\mathcal{G}_{\kappa }\text{.}
	\label{Valpha}
\end{equation}

\begin{proof}[Proof of Proposition \protect\ref{fibrado}]
	Since $G_{\kappa }$ acts transitively on the positively oriented orthonormal
	frame bundle of $M_{\kappa }$, given $\ell$, $p$, $A$ as in Definition \ref%
	{def}, there exists $g\in G_{\kappa }$ such that $g\left( e_{0}\right) =p$, $%
	dg_{e_{0}}\left( e_{3}\right) =A$ and sends $\ell _{o}$ to $\ell $ keeping
	the orientation. Since clearly $G_{\kappa }$ carries $\alpha $-helicodal
	curves in $\alpha $-helicoidal curves, it turns out that the group $%
	G_{\kappa }$ acts transitively on $\mathcal{A}_{\kappa }^{\alpha }$. Thus, $%
	\mathcal{A}_{\kappa }^{\alpha }=\left\{ dg_{\ell _{o}}\left( X_{\alpha
	}\right) :g\in G_{\kappa }\right\} $; in other words, it is the orbit of $%
	X_{\alpha }$ in $T\mathcal{G}_{\kappa }$ under the action of $G_{\kappa }$
	and therefore the inclusion is a fiber subbundle of $T\mathcal{G}_{\kappa }$.
\end{proof}

Next we give an explicit homogeneous presentation of $\mathcal{A}_{\kappa
}^{\alpha }$. We call%
\begin{equation}
	\xi _{\alpha }=\left( 
	\begin{array}{cc}
		0 & -\left( a_{\kappa }^{\alpha }\right) ^{T} \\ 
		a_{1}^{\alpha } & 0%
	\end{array}%
	\right) =Z\left( 
	\begin{array}{cc}
		0 & \alpha \\ 
		1 & 0%
	\end{array}%
	\right) \in \mathfrak{p}_{\kappa }\text{,}  \label{xiAlpha}
\end{equation}%
where $Z$ was defined in (\ref{zeta}) and $a_{\kappa }^{\alpha }=\left( 
\begin{array}{cc}
	0 & \alpha \\ 
	\kappa & 0%
\end{array}%
\right) $.

\medskip

\begin{lemma}
	\label{Vv} Let $X_{\alpha }$ be as in \emph{(\ref{Valpha})}. Then $d\left(
	F\circ \ \varpi \right) _{I}\left( \xi _{\alpha }\right) =X_{\alpha }$.
\end{lemma}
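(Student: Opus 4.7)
The plan is to reduce the equality of tangent vectors to an equality of curves in $\mathcal{G}_\kappa$: since $d(F\circ\varpi)_I(\xi_\alpha)$ is the initial velocity of $t\mapsto \exp(t\xi_\alpha)\cdot \ell_o$ and $X_\alpha$ is the initial velocity of $\Gamma_o^\alpha$, it suffices to show that $\exp(t\xi_\alpha)\cdot \ell_o = \Gamma_o^\alpha(t)$ for every $t$, which I would prove by producing matching parametrizations of the underlying geodesics.

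First I would compute $\exp(t\xi_\alpha)$ directly from the explicit form in (\ref{xiAlpha}). The matrix $\xi_\alpha$ preserves each of the two-planes $\mathrm{span}\{e_0,e_3\}$ and $\mathrm{span}\{e_1,e_2\}$, acting on the first block as $R_\kappa'(0)$ and on the second as $\alpha\, R_1'(0)$. Exponentiating,
\[
\exp(t\xi_\alpha)=\mathrm{diag}\bigl(R_\kappa(t),R_1(\alpha t)\bigr),
\]
so that
\[
\exp(t\xi_\alpha)\,\sigma_o(s)=\cos_\kappa(s)[\cos_\kappa(t)e_0+\sin_\kappa(t)e_3]+\sin_\kappa(s)[\cos(\alpha t)e_1+\sin(\alpha t)e_2].
\]

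Next I would unwind the definition of $\Gamma_o^\alpha(t)=[\phi_{\ell_o,p_o,A_o}^\alpha(\cdot,t)]$. With $p_o=e_0$, $A_o=e_3$, $\sigma_o'(0)=e_1$ and $B_o=e_2$, the axis $\gamma_{A_o}$ lies in $\mathrm{span}\{e_0,e_3\}$ and equals $\cos_\kappa(t)e_0+\sin_\kappa(t)e_3$. The key observation is that $e_1$ and $e_2$, being $\langle\,,\,\rangle_\kappa$-orthogonal to every point and every tangent vector of $\gamma_{A_o}$, extend to ambient-constant vector fields along $\gamma_{A_o}$ that are tangent to $M_\kappa$. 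By the Gauss formula for $M_\kappa\subset(\mathbb{R}^4,\langle\,,\,\rangle_\kappa)$, whose second fundamental form is proportional to the position vector, any ambient-constant field that is everywhere tangent along a geodesic of $M_\kappa$ is automatically parallel in the Levi-Civita sense; hence $V_t=e_1$ and $B_t=e_2$ for all $t$. Substituting into (\ref{phiAlpha}) and using the standard formula $\cos_\kappa(s)\,p+\sin_\kappa(s)\,v$ for the unit speed geodesic in $M_\kappa$ starting at $p$ with velocity $v\perp p$ yields precisely the same expression as for $\exp(t\xi_\alpha)\sigma_o(s)$.

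Therefore $\exp(t\xi_\alpha)\circ\sigma_o = \phi_{\ell_o,p_o,A_o}^\alpha(\cdot,t)$ as parametrized geodesics of $M_\kappa$, so their classes agree in $\mathcal{G}_\kappa$, and differentiating at $t=0$ via the identification (\ref{identif}) gives the claim. There is no reparametrization subtlety, because the two parametrizations coincide pointwise in $s$. The only point that needs care is the parallel-transport claim, which is essentially the observation that $M_\kappa$ is totally umbilic in $(\mathbb{R}^4,\langle\,,\,\rangle_\kappa)$ with shape tensor a multiple of the identity; everything else is bookkeeping with the explicit block form of $\xi_\alpha$.
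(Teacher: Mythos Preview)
Your proof is correct and follows essentially the same strategy as the paper's: compute $\exp(t\xi_\alpha)$ explicitly and verify that it carries $\sigma_o$ to the geodesic $\phi_{\ell_o,p_o,A_o}^\alpha(\cdot,t)$, then differentiate at $t=0$. The only differences are cosmetic---the paper checks the equality of geodesics by matching initial conditions in $s$, while you write both sides out in closed form using the explicit parallel transport; just note that your ``$\mathrm{diag}$'' is with respect to the ordered basis $(e_0,e_3,e_1,e_2)$, not the canonical order used elsewhere in the paper.
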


\begin{proof}
	For any $t\in \mathbb{R}$, let $S_{t}\in G_{\kappa }$ given by 
	\begin{equation}
		S_{t}=\left( 
		\begin{array}{cccc}
			\cos _{\kappa }t & 0 & 0 & -\kappa \sin _{\kappa }t \\ 
			0 & \cos \alpha t & -\sin \alpha t & 0 \\ 
			0 & \sin \alpha t & \cos \alpha t & 0 \\ 
			\sin _{\kappa }t & 0 & 0 & \cos _{\kappa }t%
		\end{array}
		\right) \text{.}  \label{St}
	\end{equation}
	Then $S_{t}=\exp \left( t\xi _{\alpha }\right) $, since $S_{s+t}=S_{s}\circ
	S_{t}$ for all $s,t$ and $S_{0}^{\prime }=\xi _{\alpha }$.
	
	Now we check that $S_{t}\sigma _{o}\left( s\right) =\phi _{\ell
		_{o},p_{o},A_{o}}^{\alpha }\left( s,t\right) $ holds for all $s,t\in \mathbb{%
		\ R}$. We fix $t$ and verify that both expressions are equal as functions of 
	$s $. Since they are geodesics with the same initial value $\cos _{\kappa
	}t\ e_{0}+\sin _{\kappa }t\ e_{3}$, it suffices to see that they have the
	same initial velocity. We compute 
	\begin{equation*}
		\left. \tfrac{d}{ds}\right\vert _{0}S_{t}\sigma _{o}\left( s\right)
		=S_{t}\left. \tfrac{d}{ds}\right\vert _{0}\sigma _{o}\left( s\right)
		=S_{t}e_{1}=\cos \left( \alpha t\right) e_{1}+\sin \left( \alpha t\right)
		e_{2}\text{,}
	\end{equation*}
	which coincides with 
	\begin{equation*}
		\left. \tfrac{d}{ds}\right\vert _{0}\phi _{\ell _{o},p_{o},A_{o}}^{\alpha
		}\left( s,t\right) =\left. \tfrac{d}{ds}\right\vert _{0}\gamma _{\cos \left(
			\alpha t\right) V_{t}+\sin \left( \alpha t\right) B_{t}}\left( s\right)
		=\cos \left( \alpha t\right) V_{t}+\sin \left( \alpha t\right) B_{t}\text{,}
	\end{equation*}
	as desired. Finally, 
	\begin{equation*}
		d\left( F\circ \ \varpi \right) _{I}\left( \xi _{\alpha }\right) =d\left(
		F\circ \ \varpi \right) _{I}\left( S_{0}^{\prime }\right) =\left. \tfrac{d}{
			dt}\right\vert _{0}F\circ \ \varpi \circ S_{t}=\left. \tfrac{d}{dt}
		\right\vert _{0}S_{t}\left[ \sigma _{o}\right] \text{,}
	\end{equation*}
	which equals $\left. \frac{d}{dt}\right\vert _{0}\ \Gamma _{o}^{\alpha
	}\left( t\right) =X_{\alpha }$ by the computation above.
\end{proof}


\begin{proposition}
	\label{hachek}\emph{a)} Under the identification \emph{(}\ref{identif}\emph{)%
	}, the fiber of $\mathcal{A}_{\kappa }^{\alpha }$ over $\ell _{0}$ is%
	\begin{equation*}
		\text{\emph{Ad}}\left( K_{\kappa }\right) \left( \xi _{\alpha }\right)
		=\left\{ \text{\emph{Ad}}\left( k\left( s,t\right) \right) \left( \xi
		_{\alpha }\right) :s,t\in \mathbb{R}\right\} \text{,}
	\end{equation*}%
	with $k\left( s,t\right) $ as in \emph{(}\ref{isotropiaDeG}\emph{)}.
	
	\emph{b)} If $v\in \mathcal{A}_{\kappa }^{\alpha }$, then $-v\in \mathcal{A}%
	_{\kappa }^{\alpha }$.
	
	\emph{c)} For $\kappa \in \left\{ 0,-1\right\} $ and $\alpha \neq 0$, $%
	G_{\kappa }$ acts simply transitively on $\mathcal{A}_{\kappa }^{\alpha }$.
\end{proposition}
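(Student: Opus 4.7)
The plan is as follows. For part (a), I would use that the proof of Proposition~\ref{fibrado} already exhibits $\mathcal{A}_\kappa^\alpha$ as the $G_\kappa$-orbit of $X_\alpha$; the fiber over $\ell_o$ is therefore obtained by restricting to $g$ with $g\cdot \ell_o=\ell_o$, i.e.\ $g\in K_\kappa$. Under the identification~(\ref{identif}) coming from the reductive decomposition $\mathfrak{g}_\kappa=\mathfrak{k}_\kappa\oplus\mathfrak{p}_\kappa$, the differential $dk_{\ell_o}$ acts on $T_{\ell_o}\mathcal{G}_\kappa$ as $\mathrm{Ad}(k)$ on the $\mathrm{Ad}(K_\kappa)$-invariant complement $\mathfrak{p}_\kappa$. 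Combining this with Lemma~\ref{Vv}, which identifies $X_\alpha$ with $\xi_\alpha$, gives the fiber as $\mathrm{Ad}(K_\kappa)(\xi_\alpha)$.

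For part (b), rather than look for $-\xi_\alpha$ inside this fiber, I would exploit a direct geometric time-reversal. From~(\ref{phiAlpha}), along the axis $\gamma_{-A}(t)=\gamma_A(-t)$ the parallel vector fields starting at $\sigma'(0)$ and at $-B=(-A)\times\sigma'(0)$ are precisely $V_{-t}$ and $-B_{-t}$. A short computation then yields the identity
\begin{equation*}
\Gamma_{\ell,p,A}^\alpha(-t)=\Gamma_{\ell,p,-A}^\alpha(t).
\end{equation*}
In particular, if $v$ is the initial velocity of $\Gamma_{\ell,p,A}^\alpha$, then $-v$ is the initial velocity of the $\alpha$-helicoidal curve $\Gamma_{\ell,p,-A}^\alpha$, and hence $-v\in\mathcal{A}_\kappa^\alpha$.

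For part (c), transitivity of $G_\kappa$ on $\mathcal{A}_\kappa^\alpha$ is already at hand, so I would concentrate on showing that the isotropy of $X_\alpha$ is trivial when $\kappa\in\{0,-1\}$ and $\alpha\neq 0$. Any stabilizing element must fix $\ell_o$ and therefore lie in $K_\kappa$, and by~(a) the problem becomes $\mathrm{Ad}(k(s,t))(\xi_\alpha)=\xi_\alpha$. Writing this out using the $2\times 2$-block form of $\xi_\alpha$ from~(\ref{xiAlpha}) reduces to the matrix identity $R_1(t)\,a_1^\alpha = a_1^\alpha\,R_\kappa(s)$; comparing the four scalar entries I expect to obtain the decisive relation
\begin{equation*}
(\alpha^2-\kappa)\sin_\kappa s=0,
\end{equation*}
together with $\sin t = -\alpha\sin_\kappa s$ and $\cos t = \cos_\kappa s$. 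For $\kappa\in\{0,-1\}$ and $\alpha\neq 0$ one has $\alpha^2-\kappa>0$, so $\sin_\kappa s=0$ forces $s=0$ (since $\sin_0$ and $\sinh$ vanish only at the origin); the remaining equations then give $t=0$.

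The only step that really needs computation is this scalar comparison in~(c); its outcome is what dictates the dichotomy of Theorem~\ref{TeoP} and also explains why part~(c) excludes both the spherical case and the Euclidean case $\alpha=0$: for $\kappa=1$ the element $-I\in G_1=SO(4)$ conjugates $\xi_\alpha$ to itself regardless of $\alpha$, while for $\kappa=0,\,\alpha=0$ the entire one-parameter subgroup $\{k(s,0):s\in\mathbb{R}\}$ stabilizes $\xi_0$.
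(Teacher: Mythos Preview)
Your argument is correct. Parts (a) and (c) follow essentially the same route as the paper: for (a) the paper also restricts the transitive $G_\kappa$-action to $K_\kappa$ and invokes the commutativity of $dk_{\ell_o}$ with $\mathrm{Ad}(k)$ on $\mathfrak{p}_\kappa$ together with Lemma~\ref{Vv}; for (c) the paper writes out the same matrix identity and solves the resulting scalar system, treating $\kappa=0$ and $\kappa=-1$ separately, whereas your combination into $(\alpha^2-\kappa)\sin_\kappa s=0$ packages both cases at once and makes the link to condition~(b) of Theorem~\ref{TeoP} more visible. Part (b), however, is handled differently: the paper stays inside the algebraic description from~(a), noting that $R_1(s+\pi)=-R_1(s)$ forces $\mathrm{Ad}\bigl(k(s+\pi,t)\bigr)(\xi_\alpha)=-\mathrm{Ad}\bigl(k(s,t)\bigr)(\xi_\alpha)$, so the fiber is closed under negation. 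Your geometric time-reversal identity $\Gamma_{\ell,p,A}^\alpha(-t)=\Gamma_{\ell,p,-A}^\alpha(t)$ bypasses the identification~(\ref{identif}) entirely and works straight from Definition~\ref{philpA}; it is more intrinsic and would carry over unchanged to any ambient manifold in which $\alpha$-helicoidal curves are defined by the same parallel-transport recipe. The paper's approach, in turn, gets~(b) essentially for free once the explicit formula~(\ref{Adk}) has been written down for use in~(c).
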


\begin{proof}
	a) We know from the proof of Proposition \ref{fibrado} that $G_{\kappa }$
	acts transitively on $\mathcal{A}_{\kappa }^{\alpha }$ via the differential.
	Hence, the fiber of $\mathcal{A}_{\kappa }^{\alpha }$ over $\ell _{o}$
	equals $\left\{ dk_{\ell _{o}}\left( X_{\alpha }\right) :k\in K_{\kappa
	}\right\} $. The assertion follows now from the lemma above and the
	commutativity of the diagram 
	\begin{equation}
		\begin{array}{ccc}
			\mathfrak{p}_{\kappa } & \overset{\text{Ad }\left( k\right) }{%
				\longrightarrow } & \mathfrak{p}_{\kappa } \\ 
			\left. d\left( F\circ \ \varpi \right) _{I}\right\vert _{\mathfrak{p}%
				_{\kappa }}\downarrow \ \text{\ \ \ \ \ \ \ \ \ \ \ \ \ \ \ \ \ } &  & \text{
				\ \ \ \ \ \ \ \ \ \ \ \ \ \ \ \ \ }\downarrow \left. d\left( F\circ \ \varpi
			\right) _{I}\right\vert _{\mathfrak{p}_{\kappa }} \\ 
			T_{\ell _{o}}\mathcal{G}_{\kappa } & \overset{dk_{p}}{\longrightarrow } & 
			T_{\ell _{o}}\mathcal{G}_{\kappa }\text{.}%
		\end{array}
		\label{conm2}
	\end{equation}
	
	b) By homogeneity, we may suppose that $v$ is in the fiber over $\ell _{o}$.
	Hence $v$ has the form%
	\begin{equation}
		\text{Ad$\ $}\left( k\left( s,t\right) \right) \left( \xi _{\alpha }\right)
		=\left( 
		\begin{array}{cc}
			0_{2} & -R_{\kappa }\left( t\right) \left( a_{\kappa }^{\alpha }\right)
			^{T}R_{1}\left( -s\right) \\ 
			R_{1}\left( s\right) a_{1}^{\alpha }R_{\kappa }\left( -t\right) & 0_{2}%
		\end{array}%
		\right) \text{.}  \label{Adk}
	\end{equation}%
	Since $R_{1}\left( s+\pi \right) =R_{1}\left( \pi \right) R_{1}\left(
	s\right) =-R_{1}\left( s\right) $, we have that $-v=~$Ad$\ $$\left( k\left(
	s+\pi ,t\right) \right) \left( \xi _{\alpha }\right) $ and so it belongs to
	the fiber over $\ell _{o}$.
	
	\smallskip
	
	c) Let $H_{\kappa }\left( \alpha \right) $ be the isotropy subgroup at $%
	X_{\alpha }$ of the action of $G_{\kappa }$ on $\mathcal{A}_{\kappa
	}^{\alpha }$ (in particular, $H_{\kappa }\left( \alpha \right) \subset
	K_{\kappa }$). We have that that $H_{\kappa }\left( \alpha \right) =\left\{
	k\in K_{\kappa }\mid dk_{\ell _{o}}X_{\alpha }=X_{\alpha }\right\} $, which
	by the diagram (\ref{conm2}) equals 
	\begin{equation*}
		\left\{ k\in K_{\kappa }\mid \text{Ad}\left( k\right) \left( \xi _{\alpha
		}\right) =k\xi _{\alpha }k^{-1}=\xi _{\alpha }\right\} \text{.}
	\end{equation*}
	
	Now, by (\ref{Adk}), $k\left( s,t\right) \in K_{\kappa }$ commutes with $\xi
	_{\alpha }$ if and only if $R_{1}\left( s\right) a_{1}^{\alpha
	}=a_{1}^{\alpha }R_{\kappa }\left( t\right) $, that is, 
	\begin{equation*}
		\left( 
		\begin{array}{cc}
			-\sin s & \alpha \cos s \\ 
			\cos s & \alpha \sin s%
		\end{array}%
		\right) =\left( 
		\begin{array}{cc}
			\alpha \sin _{\kappa }t & \alpha \cos _{\kappa }t \\ 
			\cos _{\kappa }t & -\kappa \sin _{\kappa }t%
		\end{array}%
		\right) \text{.}
	\end{equation*}%
	Therefore, $k\left( s,t\right) \in H_{\kappa }\left( \alpha \right) $ if and
	only if 
	\begin{equation*}
		-\sin s=\alpha \sin _{\kappa }t\text{,\ \ \ \ \ }\cos s=\cos _{\kappa }t%
		\text{\ \ \ \ \ \ and\ \ \ \ \ \ \ }\alpha \sin s=-\kappa \sin _{\kappa }t%
		\text{.}
	\end{equation*}%
	If $\kappa =0$, this implies that $\cos s=1$ and $-\sin s=\alpha t$, and so $%
	R_{\kappa }\left( t\right) =R_{1}\left( s\right) =I$. If $\kappa =-1$, we
	have that $\cos s=\cosh t=1$ and so we arrive at the same conclusion. In
	both cases, $H_{\kappa }\left( \alpha \right) =\left\{ I\right\} $, as
	desired.
\end{proof}

\subsection{The $\protect\alpha $-helicoidal control system in the spherical
	case\label{SphericalC}}

Let $\mathbb{H}$ be the skew field of quaternions. We consider the sphere $%
S^{3}$ as the set of unit quaternions, that is, $S^{3}=\left\{ q\in \mathbb{H%
}\mid \left\vert q\right\vert =1\right\} $, which is a Lie group. It is well
known that, identifying $\mathbb{R}^{4}$ with $\mathbb{H}$, the maps $%
f:S^{3}\longrightarrow SO\left( 3\right) $ and $F:S^{3}\times S^{3}\mapsto
SO\left( 4\right) $ given by%
\begin{equation}
	f\left( p\right) \left( x\right) =px\overline{p}\text{\ \ \ \ \ \ \ \ \ and
		\ \ \ \ \ \ \ \ }F\left( p,q\right) \left( y\right) =py\overline{q}\text{,}
	\label{fyF}
\end{equation}%
for $x\in \operatorname{Im}\left( \mathbb{H}\right) \cong \mathbb{R}^{3}$ and $y\in 
\mathbb{H}\cong \mathbb{R}^{4}$, are both surjective two-to-one morphisms.

For brevity, we call $\mathcal{C}=\mathcal{G}_{1}$ the manifold of all
oriented great circles of $S^{3}$. We have that $S^{3}\times S^{3}$ acts
transitively on $\mathcal{C}$, since the action of $SO\left( 4\right) $ on $%
\mathcal{C}$ is transitive.

It is well known, for instance from \cite{GW} and \cite{Morgan}, that $%
\mathcal{C}$ is diffeomorphic to $S^{2}\times S^{2}$. We include this
assertion in the next proposition and write down the proof since it is
different from the ones given in those articles and shorter; also, it
contributes to establish the nomenclature used later. Note that $S^{1}=\left\{
e^{it}\mid t\in \mathbb{R}\right\} \subset S^{3}$.

\begin{proposition}
	\label{CS2xS2}The transitive action of $S^{3}\times S^{3}$ on $\mathcal{C}$
	has $S^{1}\times S^{1}$ as its isotropy subgroup at $c_{o}=\left[ s\mapsto
	e^{is}\right] $ and induces the \emph{(}well defined\emph{)} diffeomorphism 
	\begin{equation}
		\Phi :\mathcal{C}\rightarrow S^{2}\times S^{2}\text{,\ \ \ \ \ \ \ }\Phi
		\left( \left( p,q\right) \cdot c_{o}\right) =\left( f(p)(i),f(q)(i)\right) 
		\text{.}  \label{Phi}
	\end{equation}
\end{proposition}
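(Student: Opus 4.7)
The plan is to handle three tasks in order: computing the isotropy at $c_o$, verifying that $\Phi$ is a well-defined bijection, and upgrading it to a diffeomorphism. Transitivity of the $S^3\times S^3$-action on $\mathcal{C}$ is already granted in the paragraph preceding the statement, since $F$ in (\ref{fyF}) surjects onto $SO(4)=G_1$ and $G_1$ acts transitively on $\mathcal{C}$.

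For the isotropy, I would translate $(p,q)\cdot c_o=c_o$ into the existence of a reparametrization constant $s_o\in\mathbb{R}$ with
\begin{equation*}
p\,e^{is}\,\overline{q}=e^{i(s+s_o)}\qquad\text{for all }s\in\mathbb{R},
\end{equation*}
where the form $s+s_o$ (rather than $-s+s_o$) reflects preservation of orientation. Setting $s=0$ gives $p\,\overline{q}=e^{is_o}$, and substituting back reduces the equation to $q\,e^{is}\,\overline{q}=e^{is}$ for all $s$, i.e., $q$ centralizes the subgroup $S^1=\{e^{is}\}\subset S^3$. A direct quaternionic computation shows that the centralizer of $i$ in $S^3$ is $S^1$, so $q\in S^1$ and consequently $p=e^{is_o}q\in S^1$. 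The reverse inclusion is immediate: $(e^{it},e^{is})$ acts on $c_o$ by the reparametrization $r\mapsto r+t-s$.

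For $\Phi$, well-definedness reduces to $f(p\,e^{is})(i)=f(p)(i)$ for all $s$, which is immediate because $e^{is}$ commutes with $i$. Injectivity mirrors the isotropy argument: if $p\,i\,\overline{p}=p'\,i\,\overline{p'}$, then $\overline{p'}\,p$ centralizes $i$, hence lies in $S^1$, so $p$ and $p'$ differ by an element of $S^1$; likewise $q$ and $q'$. Hence $(p,q)$ and $(p',q')$ differ by an element of the isotropy $S^1\times S^1$ and determine the same point of $\mathcal{C}$. Surjectivity is the classical Hopf fibration $S^3\to S^2$, $p\mapsto p\,i\,\overline{p}$, applied in each factor (with $S^2$ identified with the unit sphere of $\operatorname{Im}(\mathbb{H})$).

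Finally, $\Phi$ descends from the smooth $S^3\times S^3$-equivariant map $(p,q)\mapsto(f(p)(i),f(q)(i))$, so it is smooth. Both source and target are compact smooth $4$-manifolds (the quotient $(S^3\times S^3)/(S^1\times S^1)$ has dimension $6-2=4$), and equivariance with respect to transitive actions on both sides forces $d\Phi$ to have constant rank; a smooth bijection of constant rank between manifolds of equal dimension is a diffeomorphism. The only subtle step is the isotropy computation: omitting the reparametrization constant $s_o$ would detect only the discrete center $\mathbb{Z}_2\times\mathbb{Z}_2$ of $S^3\times S^3$ instead of the correct maximal torus $S^1\times S^1$.
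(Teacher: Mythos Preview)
Your proof is correct and follows essentially the same route as the paper's. The only cosmetic differences are that the paper extracts the commutation $pi=ip$ by differentiating the identity $pe^{is}=e^{is}e^{is_o}q$ (where you instead substitute $p=e^{is_o}q$ back to get $qe^{is}\overline{q}=e^{is}$), and that the paper obtains the diffeomorphism in one stroke via the canonical identification $(S^3\times S^3)/(S^1\times S^1)\cong (S^3/S^1)\times(S^3/S^1)$ together with $S^3/S^1\cong S^2$, rather than checking well-definedness, bijectivity, and constant rank separately as you do.
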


\begin{proof}
	Let $\left( p,q\right) \in S^{1}\times S^{1}$. Then $p=e^{it}$ and $q=e^{ir}$
	for some $t,r\in \mathbb{R}$. Thus, $s\mapsto pe^{is}\overline{q}
	=e^{i(s+t-r)}$ belongs to the equivalence class $\left[ s\mapsto e^{is} %
	\right] $ and so $S^{1}\times S^{1}$ is included in the isotropy subgroup.
	Now, we check the other inclusion. Let $p,q\in S^{3}$ such that $pe^{is} 
	\overline{q}=e^{i\left( s+s_{o}\right) }$ for some $s_{o}$ and all $s$.
	Then, $pe^{is}=e^{is}e^{is_{o}}q$ for all $s$ and in particular, $%
	p=e^{is_{o}}q$. Differentiating, we have $pie^{is}=ie^{is}e^{is_{o}}q$ and
	so, $pi=ie^{is_{o}}q=ip$. Since $p$ commutes with $i$, then $p\in S^{1}$ and
	so $q=e^{-is_{o}}p\in S^{1}$ as well. Therefore, the isotropy subgroup at $%
	c_{o}$ is $S^{1}\times S^{1}$.
	
	Now, $\left( S^{3}\times S^{3}\right) /\left( S^{1}\times S^{1}\right) $ is
	canonically diffeomorphic to $\left( S^{3}/S^{1}\right) \times \left(
	S^{3}/S^{1}\right) $. Then the expression for $\Phi $ follows from the fact
	that the morphism $f$ in (\ref{fyF}) induces a transitive action of $S^{3}$
	on $S^{2}\subset \operatorname{Im}\mathbb{H}$, given by $\left( p,u\right) \mapsto
	pu \bar{p} $, with isotropy subgroup at $i$ equal to $S^{1}$.
\end{proof}

Now, we describe in terms of the identification $\Phi$ above the curve $%
\Gamma _{o}^{\alpha }$ in $\mathcal{C}$ in good position defined in (\ref%
{gamcer}). Given $\beta ,\tau \in \mathbb{R}$, we define the isometries 
\begin{equation*}
	R_{\beta }\left( q\right) =e^{\beta k/2}qe^{-\beta k/2}\text{ \ \ \ \ \ and\
		\ \ \ \ \ }T_{\tau }\left( q\right) =e^{\tau k/2}qe^{\tau k/2}
\end{equation*}
of $S^{3}$ (see (\ref{fyF})). The former is a rotation of $\mathbb{R}^{4}$
fixing $1$ and $k$, and rotating the \mbox{$i$-$j$} plane through the angle $%
\beta $. The latter is a transvection in $\tau $ along $t\mapsto e^{tk}$
(i.e. $T_{\tau }\left( e^{tk}\right) =e^{\left( t+\tau \right) k}$ and its
differential realizes the parallel transport along $t\mapsto e^{tk}$, see
for instance Theorem 2 (3) in Note 7 of \cite{KN}). Notice that $R_{\beta }$
and $T_{\tau }$ commute.

\begin{proposition}
	\label{CosasEsf}\emph{a)} The $\alpha $-helicoidal surface in $S^{3}$ with
	axis $t\mapsto e^{tk}$ and initial circle $s\mapsto e^{si}$ is given by $%
	\phi _{o}\left( s,t\right) =T_{t}R_{\alpha t}\left( e^{si}\right) $.
	
	\smallskip
	
	\emph{b)} For the corresponding curve $\Gamma _{o}^{\alpha }$ in $\mathcal{C}
	$, the associated curve in $S^{2}\times S^{2}$ is 
	\begin{equation}
		\left( \Phi \circ \Gamma _{o}^{\alpha }\right) \left( t\right) =\left(
		R_{t\left( 1+\alpha \right) }\left( i\right) ,R_{t\left( 1-\alpha \right)
		}\left( i\right) \right) \text{.}  \label{PhiGamma}
	\end{equation}
	In particular, 
	\begin{equation}
		\left( \Phi \circ \Gamma _{o}^{\alpha }\right) ^{\prime }\left( 0\right)
		=\left( \left( 1+\alpha \right) j,\left( 1-\alpha \right) j\right) \in
		T_{\left( i,i\right) }\left( S^{2}\times S^{2}\right) \text{.}
		\label{GammaPrima}
	\end{equation}
	
	\emph{c)} The fiber of $\mathcal{A}_{1}^{\alpha }$ over $\left( x,y\right)
	\in S^{2}\times S^{2}$, via the identification $\Phi$, is given by 
	\begin{equation}
		\left\{ \left( \left( 1+\alpha \right) z,\left( 1-\alpha \right) w\right)
		:z,w\in \operatorname{Im}\mathbb{H}\text{, }\left\vert z\right\vert =\left\vert
		w\right\vert =1\text{, }z\bot x\text{ and }w\bot y\right\} \text{.}
		\label{fibrak1}
	\end{equation}
\end{proposition}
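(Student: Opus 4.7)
The plan is to tackle the three parts in the order given, with (c) reducing to (b) via the homogeneity recorded in Proposition \ref{fibrado} and Proposition \ref{hachek}(a), and (b) obtained by reading off the $S^3 \times S^3$-factor decomposition of the formula established in (a). So the real work is concentrated in (a).

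For part (a), I would verify by direct quaternionic manipulation that $\phi_o(s,t) = T_t R_{\alpha t}(e^{si})$ satisfies the properties required by Definition \ref{philpA} for the axis $t \mapsto e^{tk}$ and initial circle $s \mapsto e^{si}$. Setting $s=0$ gives the axis $\phi_o(0,t) = T_t(1) = e^{tk}$, and setting $t=0$ gives the initial circle $\phi_o(s,0) = e^{si}$. For each fixed $t$, the map $s \mapsto \phi_o(s,t)$ is the image of a great circle under an isometry of $S^3$, hence a unit-speed great circle; what remains is to identify its initial velocity. By the chain rule this equals $dT_t|_1 \circ dR_{\alpha t}|_1 (i)$. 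A short computation with $e^{\beta k/2} = \cos(\beta/2) + k\sin(\beta/2)$ gives $dR_{\alpha t}|_1(i) = \cos(\alpha t)\, i + \sin(\alpha t)\, j$, and since $dT_t$ realizes the parallel transport along $t\mapsto e^{tk}$ (as recalled just before the statement), the result is exactly $\cos(\alpha t)\, V_t + \sin(\alpha t)\, B_t$, matching (\ref{phiAlpha}).

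For part (b), I would rewrite the formula from (a) as $\phi_o(s,t) = e^{tk(1+\alpha)/2}\, e^{si}\, e^{tk(1-\alpha)/2}$, which displays $\Gamma_o^\alpha(t) = (p(t), q(t)) \cdot c_o$ for explicit $p(t), q(t) \in S^3$ read off after conjugating the right factor to put it in the form $\overline{q(t)}$ required by (\ref{fyF}). Then $\Phi(\Gamma_o^\alpha(t)) = (f(p(t))(i), f(q(t))(i))$ by (\ref{Phi}), and the same identity $e^{\beta k/2}\, i\, e^{-\beta k/2} = R_\beta(i)$ used in (a) produces (\ref{PhiGamma}). Differentiating at $t=0$ gives (\ref{GammaPrima}) immediately.

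For part (c), I would exploit the $(S^3 \times S^3)$-equivariance of $\Phi$: the action on $\mathcal{C}$ corresponds under $\Phi$ to the product action $(u,v) \mapsto (f(p)(u), f(q)(v))$ on $S^2 \times S^2$. By Proposition \ref{fibrado} and Proposition \ref{hachek}(a), the fiber of $\mathcal{A}_1^\alpha$ over $c_o$ is the orbit of $X_\alpha$ under the isotropy $K_1 \cong S^1 \times S^1$, so via $\Phi$ the fiber over $(i,i)$ is the image of the vector in (\ref{GammaPrima}) under $(df(e^{it})|_i, df(e^{ir})|_i)$ as $(t,r)$ ranges over $\mathbb{R}^2$. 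Since $df(e^{it})|_i$ is a rotation of the tangent plane $T_i S^2 = (\mathbb{R} i)^\perp \cap \operatorname{Im}\mathbb{H}$, the images trace out all pairs $((1+\alpha) z, (1-\alpha) w)$ with $z, w \in \operatorname{Im}\mathbb{H}$ unit vectors orthogonal to $i$. To finish, I would transport by any $(p,q) \in S^3 \times S^3$ with $f(p)(i) = x$ and $f(q)(i) = y$; since $f(p)$ and $f(q)$ are orthogonal transformations of $\operatorname{Im}\mathbb{H}$, the conditions $z \perp i$, $w \perp i$ are sent to $z' \perp x$, $w' \perp y$, yielding (\ref{fibrak1}). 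The main bookkeeping hazard is keeping the signs in the exponents straight in part (b) when distinguishing $T_\tau$ (which multiplies on both sides by $e^{\tau k/2}$) from $R_\beta$ (which conjugates by $e^{\beta k/2}$), and in ensuring the right factor is expressed as $\overline{q(t)}$ rather than $q(t)$ before invoking (\ref{Phi}).
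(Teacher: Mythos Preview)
Your proposal is correct and follows essentially the same route as the paper: part (a) is verified from the stated properties of $R_\beta$ and $T_\tau$, part (b) by expanding $T_tR_{\alpha t}(e^{si})$ as a quaternion product $p_t\,e^{si}\,\overline{q_t}$ and reading off $\Phi$, and part (c) by computing the $S^1\times S^1$-orbit of the vector in (\ref{GammaPrima}) over the basepoint $(i,i)$ and invoking homogeneity. Your write-up is in places more explicit than the paper's (spelling out the initial-velocity computation in (a) and the transport to a general $(x,y)$ in (c)), and your closing remark about the sign bookkeeping in distinguishing $T_\tau$ from $R_\beta$ and in writing the right factor as $\overline{q(t)}$ is well taken---that is exactly where care is needed.
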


\begin{proof}
	The first assertion follows from the properties of $R_{\beta }$ and $T_{\tau
	}$ we mentioned when we introduced them above. It implies that 
	\begin{equation*}
		\Gamma _{o}^{\alpha }\left( t\right) =\left[ s\mapsto T_{t}R_{\alpha
			t}\left( e^{si}\right) \right] =\left[ s\mapsto e^{tk/2}e^{\alpha
			tk/2}e^{si}e^{-\alpha tk/2}e^{tk/2}\right] =\left( p_{t},q_{t}\right) \cdot
		c_{o} \text{,}
	\end{equation*}
	where $p_{t}=e^{\left( 1+\alpha \right) tk/2}$, $q_{t}=e^{\left( 1-\alpha
		\right) tk/2}$. Then, $\left( \Phi \circ \Gamma _{o}^{\alpha }\right) \left(
	t\right) =\left( p_{t}i\overline{p_{t}},q_{t}i\overline{q_{t}}\right) $ and
	( \ref{PhiGamma}) follows. A straightforward computation yields (\ref%
	{GammaPrima}).
	
	Now we verify (c). By homogeneity we may suppose $x=y=i$. As we saw in the
	proof of Proposition \ref{fibrado}, the group $G_{1}=SO\left( 4\right) $
	acts transitively on $\mathcal{A}_{1}^{\alpha }$. Since $S^{3}\times S^{3}$
	covers $SO\left( 4\right) $, we may write $\mathcal{A}_{1}^{\alpha }=\left\{
	p\Gamma _{o}^{\prime }\left( 0\right) \overline{q}:p,q\in S^{3}\right\}$.
	
	By Proposition \ref{CS2xS2}, the isotropy subgroup of the action of $%
	S^{3}\times S^{3}$ on $S^{2}\times S^{2}\simeq \mathcal{C}$ is $S^{1}\times
	S^{1}$. Thus, the fiber of $\mathcal{A}_{1}^{\alpha }$ over $c_{o}\simeq
	\left( i,i\right) $ is $\left\{ p\Gamma _{o}^{\prime }\left( 0\right) 
	\overline{q}:p,q\in S^{1}\right\} $ and using (\ref{GammaPrima}) we get that
	it equals 
	\begin{equation*}
		\left\{ \left( \left( 1+\alpha \right) pj\overline{p},\left( 1-\alpha
		\right) qj\overline{q}\right) :p,q\in S^{1}\right\} \text{.}
	\end{equation*}
	Now, putting $p=e^{it}$, we have $pj\overline{p}=e^{it}je^{-it}=\cos \left(
	2t\right) j+\sin \left( 2t\right) k,$ which are exactly the unit elements on 
	$\operatorname{Im}\mathbb{H}$ orthogonal to $i$. Thus, (\ref{fibrak1}) follows.
\end{proof}


Next we recall the concept of Hopf fibration. The left multiplication by $i$
in $\mathbb{H}$ induces on it a vector space structure over $\mathbb{C}$. We
have that $\left\{ \mathbb{C}q\cap S^{3}\mid q\in S^{3}\right\}$, the set
formed by all the intersections of complex lines with the sphere, is the set
of fibers of a fibration of $S^{3}$ by oriented great circles, which is
known as the \textbf{standard Hopf fibration}. Any fibration congruent to it
by an isometry of $S^{3}$ (which does not necessarily preserve the
orientation) is called a \textbf{Hopf fibration}.

The following proposition is known, for instance, from \cite{GW}. For the
reader's convenience we give a proof in the framework on this subsection.

\begin{proposition}
	\label{HopfFactor}A subset $A$ of $\mathcal{C}$ consists of all the fibers
	of a Hopf fibration if and only if $\Phi \left( A\right) =S^{2}\times
	\left\{ z\right\} $ or $\Phi \left( A\right) =\left\{ z\right\} \times S^{2}$
	for some $z\in S^{2}$.
\end{proposition}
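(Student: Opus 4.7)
My plan is to compute, under the diffeomorphism $\Phi$ of Proposition \ref{CS2xS2}, the image of the standard Hopf fibration, and then to describe how the full isometry group of $S^3$ moves this image inside $S^2\times S^2$.

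First I would identify $\Phi(H_0)$, where $H_0\subset\mathcal{C}$ is the set of fibers $[s\mapsto e^{is}q]$, $q\in S^3$, of the standard Hopf fibration. Each such fiber equals $(1,\overline q)\cdot c_o$, hence by (\ref{Phi}) it maps to $(i,\overline q\,i\,q)$. As $q$ varies, the map $q\mapsto\overline q\,i\,q$ realises the transitive $S^3$-action on $S^2\subset\operatorname{Im}\mathbb{H}$, so its image sweeps out the whole sphere; thus $\Phi(H_0)=\{i\}\times S^2$. Next I would transport the action of the isometry group of $S^3$. Via (\ref{fyF}) and (\ref{Phi}), an orientation-preserving isometry $F(p,q)\in SO(4)$ induces on $S^2\times S^2$ the factorwise action $(u,v)\mapsto(pu\overline p,\,qv\overline q)$. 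Since $S^3$ acts transitively on $S^2$, the $SO(4)$-orbit of $\Phi(H_0)$ is precisely the collection $\{\,\{z\}\times S^2:z\in S^2\,\}$.

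To produce the subsets of the form $S^2\times\{z\}$, I would introduce a single orientation-reversing isometry, namely quaternionic conjugation $c(q)=\overline q$. Using the identity $e^{-is}=j\,e^{is}\,\overline j$ I would rewrite
\begin{equation*}
c\cdot\bigl((p,q)\cdot c_o\bigr)=[s\mapsto qe^{-is}\overline p]=(qj,\,pj)\cdot c_o,
\end{equation*}
which by (\ref{Phi}) shows that $c$ corresponds, via $\Phi$, to the map $(u,v)\mapsto(-v,-u)$ on $S^2\times S^2$. In particular $c\cdot H_0$ corresponds to $S^2\times\{-i\}$, and composition with $SO(4)$ then yields every $S^2\times\{z\}$. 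For the converse, if $\Phi(A)=\{z\}\times S^2$ I would pick $p\in S^3$ with $pi\overline p=z$ so that $A=F(p,1)\cdot H_0$, a Hopf fibration; the case $\Phi(A)=S^2\times\{z\}$ reduces to the previous one by applying $c$ first.

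The only non-routine step I anticipate is the identification of conjugation as the swap-and-negate map $(u,v)\mapsto(-v,-u)$: it hinges on the quaternionic identity $e^{-is}=j\,e^{is}\,\overline j$ used to re-express $c\cdot c_o$ as an orbit of $c_o$ under $S^3\times S^3$. Once this is in hand, everything else follows from the factorwise presentation supplied by Proposition \ref{CS2xS2} together with the homogeneity of $S^2$.
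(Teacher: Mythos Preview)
Your argument is correct and essentially follows the paper's own proof: both compute the $\Phi$-image of the standard Hopf fibration and then transport it by the full isometry group $O(4)$, using the same quaternionic identity $e^{-is}=je^{is}\overline{j}$ (equivalently $\overline{c_o}=jc_o(-j)$) to handle the orientation-reversing case. The only cosmetic difference is organizational: the paper directly computes $\Phi(H_l)$ and $\Phi(H_r)$ for a generic isometry, whereas you first identify how $SO(4)$ and conjugation act on $S^2\times S^2$ via $\Phi$ (factorwise rotation and the swap-and-negate $(u,v)\mapsto(-v,-u)$, respectively) and then apply this to $\Phi(H_0)=\{i\}\times S^2$.
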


\begin{proof}
	As above, let $c_{o}=\left[ s\mapsto e^{is}\right] $. The standard Hopf
	fibration has fibers $c_{o}q$, with $q\in S^{3}$. By (\ref{fyF}), the
	elements of $O\left( 4\right) $ have either the form $q\mapsto p_{1}q%
	\overline{p_{2}}$ (those preserving orientation) or the form $q\mapsto p_{1}%
	\overline{q}\,\overline{p_{2}}$ (those inverting orientation), with $p_{1}$, 
	$p_{2}\in S^{3}$. Then, the set of fibers of a Hopf fibration has the form $%
	H_{l}$ or $H_{r}$, where 
	\begin{equation*}
		H_{l}=\left\{ p_{1}c_{o}q\overline{p_{2}}\mid q\in S^{3}\right\} \text{\ \ \
			\ \ \ and \ \ \ \ \ }H_{r}=\left\{ p_{1}\,\overline{c_{o}q}\,\overline{p_{2}}
		\mid q\in S^{3}\right\} \text{.}
	\end{equation*}
	Now, 
	\begin{equation*}
		H_{l}=\left\{ p_{1}c_{o}\overline{q}\mid q\in S^{3}\right\} =\left\{ \left(
		p_{1},q\right) \cdot c_{o}\mid q\in S^{3}\right\}
	\end{equation*}
	and hence $\Phi \left( H_{l}\right) =\left\{ \left( p_{1}i\overline{p_{1}}%
	,qi \overline{q}\right) \mid q\in S^{3}\right\} =\left\{ z\right\} \times
	S^{2}$, with $z=p_{1}i\overline{p_{1}}$, since $q\mapsto f\left( q\right)
	\left( i\right) $ is onto $S^{2}$. On the other hand, we have that $%
	\overline{c_{o}} =\left[ s\mapsto e^{-is}\right] =jc_{o}\left( -j\right) $
	and so 
	\begin{equation*}
		p_{1}\,\overline{c_{o}q}\,\overline{p_{2}}=p_{1}\overline{q}\,\overline{c_{o}%
		}\, \overline{p_{2}}=p_{1}\overline{q}jc_{o}\left( -j\right) \overline{p_{2}}%
		=p_{1} \overline{q}jc_{o}\overline{p_{2}j}=\left( p_{1}\overline{q}%
		j,p_{2}j\right) \cdot c_{o} \text{.}
	\end{equation*}
	Proceeding as for $H_{l}$, we have then that $\Phi \left( H_{r}\right)
	=S^{2}\times \left\{ z\right\} $ with $z=-p_{2}i \overline{p_{2}}$.
\end{proof}


\subsection{Proofs of the results of this section}

\begin{proposition}
	\label{NotControl}For $\kappa =0,1$ and $\alpha ^{2}=\kappa $, the system $%
	\left( \mathcal{A}_{\kappa }^{\alpha },\iota _{\kappa }^{\alpha }\right) $
	is not controllable. Moreover, either if $\kappa =0$ and $\alpha =0$, or if $%
	\kappa =1$ and $\alpha =\pm 1$, a piecewise $\alpha $-admissible curve in $%
	\mathcal{G}_{\kappa }$ consists of parallel straight lines or of great
	circles in a Hopf fibration, respectively.
\end{proposition}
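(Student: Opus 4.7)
The plan is to exhibit, in each of the two cases, a quantity that is conserved along every piecewise admissible curve and that distinguishes different points of $\mathcal{G}_\kappa$, thereby obstructing controllability.

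For the Euclidean case $\kappa=0$, $\alpha=0$, I would first observe that parallel transport in $\mathbb{R}^3$ is trivial, so formula (\ref{phiAlpha}) reduces to $\phi_{\ell,p,A}^{0}(s,t)=\gamma_A(t)+s\,\sigma'(0)$. Hence every $0$-helicoidal curve $\Gamma_{\ell,p,A}^{0}$ consists of lines all parallel to $\ell$. Using the diffeomorphism $\psi:TS^2\to\mathcal{L}$ from (\ref{difeoTS2L}), if $\Gamma_{\ell,p,A}^{0}(t)=\psi(v(t),u(t))$ then $v(t)$ is constant. So the fiber $\mathcal{A}_0^{0}|_\ell$ lies in the kernel of $d(\mathrm{pr}_{S^2}\circ\psi^{-1})_\ell$, and along any $0$-admissible curve $\gamma$ the composition $\mathrm{pr}_{S^2}\circ\psi^{-1}\circ\gamma$ is constant. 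At a switch point of a piecewise admissible curve, the value of $\mathrm{pr}_{S^2}\circ\psi^{-1}$ is determined by the line itself, so it agrees on both sides. Therefore every piecewise $0$-admissible curve consists of lines parallel to a fixed direction, which proves both non-controllability and the stated description.

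For the spherical case $\kappa=1$, $\alpha=\pm1$, I would argue directly from Proposition \ref{CosasEsf}(c). For $\alpha=1$, formula (\ref{fibrak1}) gives the fiber at $(x,y)$ as $\{(2z,0):z\in\operatorname{Im}\mathbb{H},\ |z|=1,\ z\perp x\}$, whose second component vanishes; writing $\Phi\circ\gamma(t)=(x(t),y(t))$, the coordinate $y(t)$ is therefore constant along any admissible piece. Symmetrically, $x(t)$ is constant when $\alpha=-1$. Continuity of $\Phi\circ\gamma$ in $S^2\times S^2$ ensures the conserved coordinate remains constant across switch points of a piecewise admissible curve, which is thus contained in $\Phi^{-1}(S^2\times\{y_0\})$ (respectively, $\Phi^{-1}(\{x_0\}\times S^2)$). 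By Proposition \ref{HopfFactor} this is precisely the set of fibers of a Hopf fibration, so not every great circle can be reached and the system fails to be controllable.

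No major obstacle is anticipated: the fiber descriptions provided by the earlier propositions already do the heavy lifting, and the argument reduces to identifying the conserved invariant in each case and ensuring that continuity carries it across the switches of a piecewise admissible curve.
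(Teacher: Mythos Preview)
Your proposal is correct and follows essentially the same approach as the paper: in the Euclidean case you both use the diffeomorphism $\psi$ to show the $S^2$-component (direction) is conserved along admissible curves, and in the spherical case you both invoke Proposition~\ref{CosasEsf}(c) and Proposition~\ref{HopfFactor} to conclude that one factor of $S^2\times S^2$ is constant. The only cosmetic difference is that the paper computes $\psi^{-1}(\ell_t)$ and $\psi^{-1}\Gamma_t(\tau)$ explicitly and compares first coordinates, whereas you phrase it as the fiber lying in the kernel of $d(\mathrm{pr}_{S^2}\circ\psi^{-1})$; these are the same observation.
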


\begin{proof}
	First we consider the Euclidean case with $\alpha =0$. Let $t\mapsto \ell
	_{t}$ be a $0$-admissible curve in $\mathcal{L}=\mathcal{G}_{0}$. For each $%
	t $ there exist $p_{t}\in \ell _{t}$ and $A_{t}$ such that $\frac{d}{dt}\ell
	_{t}=\Gamma _{t}^{\prime }\left( 0\right) $, with $\Gamma _{t}=\Gamma _{\ell
		_{t},p_{t},A_{t}}$, that is, 
	\begin{equation*}
		\Gamma _{t}\left( \tau \right) =\left[ s\mapsto \phi _{\ell
			_{t},p_{t},A_{t}}\left( s,\tau \right) =p_{t}+\tau A_{t}+sv_{t}\right] \text{
			,}
	\end{equation*}
	where $v_{t}\in S^{2}$ is the direction of $\ell _{t}$, in particular, $%
	v_{t}\bot A_{t}$. Via the diffeomorphism $\psi :TS^{2}\rightarrow \mathcal{L}
	$ in (\ref{difeoTS2L}) and recalling the expression for its inverse
	given afterwards, we have 
	\begin{equation*}
		\psi ^{-1}\left( \ell _{t}\right) =\left( v_{t},p_{t}-\left\langle
		p_{t},v_{t}\right\rangle v_{t}\right) \text{ \ \ \ \ \ and\ \ \ \ \ \ }\psi
		^{-1}\Gamma _{t}\left( \tau \right) =\left( v_{t},p_{t}+\tau
		A_{t}-\left\langle p_{t},v_{t}\right\rangle v_{t}\right) \text{.}
	\end{equation*}
	
	Now $\frac{d}{dt}\ell _{t}=\left. \frac{d}{d\tau }\right\vert _{0}\Gamma
	_{t}\left( \tau \right) $ implies that $\left. \frac{d}{dt}\right\vert
	_{0}\psi ^{-1}\left( \ell _{t}\right) =\left. \frac{d}{d\tau }\right\vert
	_{0}\left( \psi ^{-1}\Gamma _{t}\right) \left( \tau \right) $. Comparing the
	first coordinates we obtain $v_{t}^{\prime }=0$. Therefore the curve $\ell
	_{t}$ consists of parallel lines and in particular the system is not controllable.
	
	In order to deal with the spherical case we use the identification $\mathcal{%
		\ C}\cong S^{2}\times S^{2}$ introduced in (\ref{Phi}). Suppose that $\alpha
	=1 $ and let $\gamma =\left( \gamma _{1},\gamma _{2}\right) $ be a piecewise
	admissible curve in $S^{2}\times S^{2}$. Then, the velocity $\gamma ^{\prime
	}\left( t\right) $ of each piece of $\gamma \ $is in the fiber of $\mathcal{%
		A }_{1}^{1}$ over $\gamma \left( t\right) $, which by (\ref{fibrak1}) is
	included in $T_{\gamma _{1}\left( t\right) }S^{2}\times \left\{ 0_{\gamma
		_{2}\left( t\right) }\right\} $. Thus, $\gamma _{2}^{\prime }=0$ and then $%
	\gamma _{2}$ is constant, say $\gamma _{2}\equiv y_{o}$. So, the curve $%
	\gamma $ lies in $S^{2}\times \left\{ y_{o}\right\} $, that consists of the
	fibers of a Hopf fibration, as we saw in Proposition \ref{HopfFactor}. Hence, 
	two oriented circles cannot be joined by a piecewise $1$-admissible 
	curve if they do not share the projection onto the second
	factor. So, the system is not controllable. If $\alpha =-1$ a similar
	argument applies, involving $\left\{ x_{o}\right\} \times S^{2}$.
\end{proof}

\begin{proposition}
	\label{sustancial} Let $\kappa \in\left\{0,1,-1\right\}$. For any $\ell \in 
	\mathcal{G} _{\kappa }$, the fiber of $\mathcal{A}_{\kappa }^{\alpha }$ over 
	$\ell $ is a substantial submanifold of $T_{\ell }\mathcal{G}_{\kappa }$ if
	and only if $\alpha ^{2}\neq \kappa $.
\end{proposition}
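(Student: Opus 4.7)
The plan is to reduce, by homogeneity, to studying the single fiber over $\ell_o$. By Proposition \ref{hachek}(a) together with the explicit formula (\ref{Adk}), and under the identification $Z(x,y)\leftrightarrow (x,y)$ between $\mathfrak{p}_{\kappa }$ and $\mathbb{R}^{2\times 2}$ (where $(x,y)$ denotes the $2\times 2$ matrix with columns $x$ and $y$), that fiber corresponds to the orbit
\begin{equation*}
\mathcal{O}=\{R_{1}(s)\,a_{1}^{\alpha }\,R_{\kappa }(-t):s,t\in \mathbb{R}\}\subset \mathbb{R}^{2\times 2}.
\end{equation*}
Substantiality of the fiber is equivalent to the statement that no affine equation $\mathrm{tr}(A^{T}X)=c$ with $A\in\mathbb{R}^{2\times 2}\setminus\{0\}$ holds on all of $\mathcal{O}$.

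Writing $A=\bigl(\begin{smallmatrix}a&b\\ d&e\end{smallmatrix}\bigr)$ and expanding $\mathrm{tr}\bigl(A^{T}R_{1}(s)\,a_{1}^{\alpha }\,R_{\kappa }(-t)\bigr)$ realizes the left-hand side as a linear combination of the five functions
\begin{equation*}
1,\ \cos s\cos _{\kappa }t,\ \cos s\sin _{\kappa }t,\ \sin s\cos _{\kappa }t,\ \sin s\sin _{\kappa }t
\end{equation*}
of $(s,t)$. These are $\mathbb{R}$-linearly independent for every $\kappa \in \{0,1,-1\}$: evaluating any such relation at $t=0$ separates off $1,\cos s,\sin s$ by independence of those three functions, after which the residual identity factors as $\sin_{\kappa }t$ times an expression in $\cos s,\sin s$ that must vanish. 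Matching coefficients to zero then turns the condition $\mathcal{O}\subset H_{A,c}$ into a homogeneous linear system on $a,b,d,e,c$.

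A direct substitution decouples this system into the two scalar relations $(\alpha ^{2}-\kappa )\,d=0$ and $(\alpha ^{2}-\kappa )\,e=0$, supplemented by $b=-\alpha d$, $a=\alpha e$ and $c=0$. Hence the only solution is trivial precisely when $\alpha ^{2}\neq \kappa $, proving that $\mathcal{O}$ is substantial in exactly that range. When $\alpha ^{2}=\kappa $ (necessarily $\kappa \in \{0,1\}$), the non-trivial solutions of the system exhibit an explicit linear hyperplane containing $\mathcal{O}$, so the fiber fails to be substantial; in particular, this matches the degenerate descriptions (parallel lines; fibers of a Hopf fibration) already observed in Proposition \ref{NotControl}. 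The only real obstacle is the organized bookkeeping of the product $R_{1}(s)\,a_{1}^{\alpha }\,R_{\kappa }(-t)$ and the coefficient matching; no deeper difficulty intervenes.
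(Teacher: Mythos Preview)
Your argument is correct and follows essentially the same route as the paper: reduce by homogeneity to the fiber over $\ell_o$, identify it with the orbit $\mathrm{Ad}(K_\kappa)(\xi_\alpha)$ via (\ref{Adk}), and test containment in a hyperplane by a linear system whose determinant factors as $(\alpha^2-\kappa)^2$. The only cosmetic differences are that the paper invokes central symmetry (Proposition~\ref{hachek}(b)) to dispense with the affine constant $c$ up front and then differentiates $f_\zeta$ in $s$ and $t$ at $t=0$, whereas you carry $c$ along and expand directly in the basis $\{1,\cos s\cos_\kappa t,\dots\}$; both yield the same decoupled $2\times 2$ blocks in $(a,e)$ and $(b,d)$.
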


\begin{proof}
	Recall that a submanifold $N$ of a vector space $W$ is said to be
	substantial if it is not included in any proper affine subspace of $W$. If $N
	$ is central symmetric, that is $-N=N$, we can substitute subspace for
	affine subspace, since the segment joining two opposite vectors in $N$
	contains the origin. If $W$ is additionally endowed with an inner product $%
	\left\langle ,\right\rangle $, then $N$ is substantial if and only if $%
	\left\langle q,u\right\rangle =0$ for every $q\in N$ only when $u=0$.
	
	Now we prove the statement of the proposition. By homogeneity, we may
	suppose that $\ell =\ell _{o}$. By Proposition \ref{hachek} (a) and (b), it
	suffices to show that \emph{Ad}$\left( K_{\kappa }\right) \left( \xi
	_{\alpha }\right) $ is not contained in a proper subspace of $\mathfrak{p}%
	_{\kappa }$. On this vector space we consider the inner product 
	\begin{equation*}
		\left\langle Z\left( X,Y\right) ,Z\left( U,V\right) \right\rangle
		=\left\langle X,U\right\rangle +\left\langle Y,V\right\rangle 
	\end{equation*}%
	(see (\ref{zeta})). Let $\zeta =Z\left( 
	\begin{array}{cc}
		x & z \\ 
		w & y%
	\end{array}%
	\right) \in \mathfrak{p}_{\kappa }$ and define $f_{\zeta }:\mathbb{R}%
	^{2}\rightarrow \mathbb{R}$ by 
	\begin{equation*}
		f_{\zeta }\left( s,t\right) =\left\langle \text{Ad}\left( k\left( s,t\right)
		\right) \left( \xi _{\alpha }\right) ,\zeta \right\rangle \text{,}
	\end{equation*}%
	where $k\left( s,t\right) \in K_{\kappa }$ is as in (\ref{isotropiaDeG}). 
	
	Suppose that $f_{\zeta }\equiv 0$. Then $\frac{\partial f_{\zeta }}{\partial
		s}\equiv \frac{\partial f_{\zeta }}{\partial t}\equiv 0$ holds and a
	straightforward computation using (\ref{Adk}) gives 
	\begin{align*}
		\tfrac{\partial f_{\zeta }}{\partial s}\left( s,0\right) & =\cos s\ \left(
		\alpha y-x\right) +\sin s\ \left( -\alpha z-w\right) =0\text{,} \\
		\tfrac{\partial f_{\zeta }}{\partial t}\left( s,0\right) & =\cos s\ \left(
		\kappa y-\alpha x\right) +\sin s\ \left( -\alpha w-\kappa z\right) =0\text{.}
	\end{align*}
	
	By the linear independence of $\cos $ and $\sin $ we obtain the linear
	system 
	\begin{equation*}
		\alpha y-x=0\text{, \ \ \ }\kappa y-\alpha x=0\text{,\ \ \ \ }-\alpha z-w=0 
		\text{, }\ \ \ -\alpha w-\kappa z=0\text{.}
	\end{equation*}
	Now, if $\alpha ^{2}\neq \kappa $, the system has only the trivial solution
	and so $\zeta =0$. Thus, in this case, the submanifold is substantial.
	
	Finally, the submanifold is not substantial if $\alpha ^{2}=\kappa $, since
	for $\zeta =Z\left( 
	\begin{array}{cc}
		\alpha & 1 \\ 
		-\alpha & 1%
	\end{array}
	\right) $, a lengthy computation yields $f_{\zeta }\equiv 0$.
\end{proof}

\bigskip

Now we present the proof of the main result.

\medskip

\begin{proof}[Proof of Theorem \protect\ref{TeoP}]
	By Proposition \ref{NotControl}, we have that (a) implies (b) and that the
	last assertion of the theorem is true. The equivalence between (b) and (c)
	was proved in the previous proposition.
	
	Now we verify that (c) implies (a). We apply Sussmann's Orbit Theorem \cite%
	{Sussmann} (we also consulted \cite{Laguna}). We begin by showing the existence
	of a smooth vector field family $D$ defined everywhere whose $D$-orbits are
	the whole manifold. Since $\mathcal{A}_{\kappa }^{\alpha }\rightarrow 
	\mathcal{G}_{\kappa }$ is a fiber bundle with typical fiber $\mathcal{F}%
	_{\kappa }^{\alpha }$ we can take trivializations $U_{i}\times \mathcal{F}%
	_{\kappa }^{\alpha }\rightarrow \pi ^{-1}\left( U_{i}\right) $ ($i\in 
	\mathcal{I}$) in such a way that the union of all $U_{i}$ covers $\mathcal{G}%
	_{\kappa }$. Let 
	\begin{equation}
		D=\left\{ \text{smooth sections }v^{i}:U_{i}\rightarrow \pi ^{-1}\left(
		U_{i}\right) \text{, }i\in \mathcal{I}\right\} \text{,}
		\label{smooth sections}
	\end{equation}%
	which is a smooth vector field family defined everywhere. We have to show
	that its $D$-orbits are the whole manifold.
	
	Let $\Delta _{D}$ be the distribution on $\mathcal{G}_{\kappa }$ defined as
	follows: $\Delta _{D}\left( \ell \right) $ is the subspace of $T_{\ell }%
	\mathcal{G}_{\kappa }$ spanned by all $v\left( \ell \right) $ such that $%
	v\in D$ and $v$ is defined on $\ell $. Since $\alpha ^{2}\neq \kappa $, we
	have by Proposition \ref{sustancial} that $\Delta _{D}\left( \ell \right)
	=T_{\ell }\mathcal{G}_{\kappa }$ for all $\ell $. Then the smallest $D$%
	-invariant distribution containing $\Delta _{D}$ coincides with $T\mathcal{G}%
	_{\kappa }$. By the Orbit Theorem, the $D$-orbit of any $\ell\in\mathcal{G}_{\kappa }$  is the whole $%
	\mathcal{G}_{\kappa }$. 
	
	Finally, notice that if $v\in D$ is as in (\ref%
	{smooth sections}), then $-v$ is also in $D$ by Proposition \ref{hachek}
	(b). This implies that the system is controllable. Indeed, let $\ell
	_{0},\ell ^\prime \in \mathcal{G}_{\kappa }$ and $v^{i}\in D$ ($i=1,\dots ,k$%
	) such that $v_{t_{k}}^{k}\cdots v_{t_{1}}^{1}\left( \ell _{0}\right) =\ell
	^\prime $, where $t\mapsto v_{t}^{i}$ denotes the flow of $v^{i}$. Call $\ell
	_{i}=v_{t_{i}}^{i}\left( \ell _{i-1}\right) $ and suppose that $t_{j}<0$ and 
	$\gamma _{j}:\left[ t_{j},0\right] \rightarrow \mathcal{G}_{\kappa }$ is the
	integral curve of $v^{j}$ with $\gamma _{j}\left( 0\right) =\ell _{j-1}$. If 
	$\gamma^j :\left[ 0,-t_{j}\right] \rightarrow \mathcal{G}_{\kappa }$ is the
	integral curve of $-v^{j}$ with $\gamma^j \left( 0\right) =\ell _{j-1}$, then $%
	\gamma ^j\left( -t_{j}\right) =\ell _{j}$.
\end{proof}

\bigskip

\begin{proof}[Proof of Proposition \protect\ref{fibratrivial}]
	We begin by describing the typical fibers. We consider first the cases $%
	\kappa =0,-1$. Since $\alpha ^{2}\neq \kappa $, we know from the proof of
	Proposition \ref{hachek} that the fiber over $\ell _{o}$ can be identified
	with $K_{\kappa }$. Hence, $\mathcal{F}_{\kappa }^{\alpha }$ is homeomorphic
	to the cylinder by (\ref{isotropiaDeG}). When $\kappa =1$ and $\alpha
	^{2}\neq 1 $, we have by (\ref{fibrak1}) that $\mathcal{F}_{1}^{\alpha }$ is
	homeomorphic to $S^{1}\times S^{1}$.
	
	To see that $\mathcal{A}_{\kappa }^{\alpha }$ and $\mathcal{G}_{\kappa
	}\times \mathcal{F}_{\kappa }^{\alpha }$ are not homeomorphic we show that
	their fundamental groups do not coincide.
	
	First we deal with the cases $\kappa =0,-1$. By Proposition \ref{hachek}, we
	can identify $\mathcal{A}_{\kappa }^{\alpha }=G_{\kappa }$. By (\ref{isomk}%
	), we have 
	\begin{equation*}
		\pi _{1}\left( \mathcal{A}_{0}^{\alpha }\right) =\pi _{1}\left( G_{0}\right)
		=\pi _{1}\left( SO\left( 3\right) \times \mathbb{R}^{3}\right) \text{, \ \ \
			\ }\pi _{1}\left( \mathcal{A}_{-1}^{\alpha }\right) =\pi _{1}\left(
		G_{-1}\right) =\pi _{1}\left( O_{o}\left( 1,3\right) \right) \text{,}
	\end{equation*}
	both equal to $\pi _{1}\left( SO\left( 3\right) \right) =\mathbb{Z}_{2}$. On
	the other hand, $\mathcal{G}_{\kappa }$ is homeomorphic to $TS^{2}$, which$\ 
	$is a deformation retract of $S^{2}$ and in particular, simply connected.
	Thus, 
	\begin{equation*}
		\pi _{1}\left( \mathcal{G}_{\kappa }\times \mathcal{F}_{\kappa }^{\alpha
		}\right) =\pi _{1}\left( TS^{2}\times \mathbb{R}\times S^{1}\right) =\pi
		_{1}\left( S^{1}\right) =\mathbb{Z}\neq \mathbb{Z}_{2}\text{.}
	\end{equation*}
	
	For the case $\kappa =1$ and $\alpha \neq \pm 1$, we know from Proposition %
	\ref{CS2xS2} that $\mathcal{C}$ is diffeomorphic to $S^{2}\times S^{2}$ and
	also that $\mathcal{F}_{1}^{\alpha }=S^{1}\times S^{1}$, by Proposition \ref%
	{CosasEsf} (c). Then $\pi _{1}\left( \mathcal{C}\times \mathcal{F}
	_{1}^{\alpha }\right) =\mathbb{Z}\times \mathbb{Z}$. By Proposition \ref%
	{fibrado}, $\mathcal{A}_{1}^{\alpha }$ is the orbit of $X_{\alpha }=\left(
	\Gamma _{o}^{\alpha }\right) ^{\prime }\left( 0\right) $ by the action of $%
	SO\left( 4\right) $, which is covered by $S^{3}\times S^{3}$ (see (\ref{fyF}
	)). By (\ref{GammaPrima}), $\mathcal{A}_{1}^{\alpha }$ is homeomorphic to $%
	\left( S^{3}\times S^{3}\right) /H$, where $H$ is the isotropy subgroup at $%
	\left( \left( 1+\alpha \right) j,\left( 1-\alpha \right) j\right) \in
	T_{\left( i,i\right) }\left( S^{2}\times S^{2}\right) $. Now, $H$ consists
	of all the elements $\left( p,q\right) \in S^{3}\times S^{3}$ that fix both
	the foot point $\left( i,i\right) $ and $\left( j,j\right) $, since $\alpha
	^{2}\neq 1$. We have that $pi\overline{p}=qi\overline{q}=i$\ and $pj%
	\overline{p}=qj\overline{ q}=j$ if and only if $p=\pm 1$ and $q=\pm 1$. Then 
	\begin{equation*}
		\mathcal{A}_{1}^{\alpha }=S^{3}\times S^{3}/\left\{ \left( \varepsilon
		,\delta \right) :\varepsilon ,\delta =\pm 1\right\} \text{,}
	\end{equation*}
	which is homeomorphic to $\left(S^{3}/\left\{ \pm 1\right\}\right) \times
	\left(S^{3}/\left\{ \pm 1\right\} \right)=\mathbb{R}P^{3}\times \mathbb{R}%
	P^{3}$, whose fundamental group is $\mathbb{Z}_{2}\times \mathbb{Z}_{2}\neq 
	\mathbb{Z}\times \mathbb{Z} $.
\end{proof}

\subsection{Examples\label{Examples}}

In this subsection we give examples of $\alpha $-admissible curves. Since we
have already dealt with various features of the spherical case, we
concentrate on the Euclidean and hyperbolic cases. We relate $\alpha $%
-admissible curves to Jacobi fields and use that to describe all the
homogeneous $\alpha $-admissible curves for $\kappa =0$. This provides
nontrivial examples, which, in their turn, constitute an interesting family
to pose Kendall's problem.

Let $\sigma $ be a unit speed geodesic of $M_{\kappa }$, $\kappa
\in\left\{0,1,-1\right\}$. A \textbf{Jacobi field} along $\sigma $ arises
from geodesic variations as follows: Let $\varphi :\mathbb{R}\times \left(
-\varepsilon ,\varepsilon \right) \rightarrow M_{\kappa }$ be a smooth map
such that for each $t\in \mathbb{R}$, $s\mapsto \varphi \left( s,t\right) =_{%
	\text{def}}\varphi _{t}(s)$ is a unit speed geodesic with $\varphi
_{0}=\sigma $. Then the associated Jacobi field $J$ along $\sigma $ is given
by $J\left( s\right) =\left. \tfrac{d}{dt}\right\vert _{0}\varphi _{t}\left(
s\right)$.

We recall that Jacobi fields are the solutions of the equation $\frac{D^{2}J%
}{dt^{2}}+R^{\kappa }\left( J,\sigma ^{\prime }\right) \sigma ^{\prime }$,
where $R^{\kappa }$ is the curvature tensor of $M_{\kappa }$, given by $%
R^{\kappa }\left( x,y\right) z=\kappa \left( \left\langle z,x\right\rangle
_{\kappa }y-\left\langle z,y\right\rangle _{\kappa }x\right) $ for $x,y,z$
local vector fields on $M_{\kappa }$. We have then that the Jacobi field
along $\sigma $ with initial conditions $J\left( 0\right) =u+a\sigma
^{\prime }\left( 0\right) $ and $\frac{DJ}{dt}\left( 0\right) =v+b\sigma
^{\prime }\left( 0\right) $, with $a,b\in \mathbb{R}$, $u,v\in \sigma
^{\prime }\left( 0\right) ^{\bot }$ turns out to be%
\begin{equation}
	J\left( s\right) =\cos _{\kappa }\left( s\right) U(s)+\sin _{\kappa }\left(
	s\right) V(s)+\left( a+sb\right) \sigma ^{\prime }(s)\text{,}
	\label{JacobiK}
\end{equation}%
where $U,V$ are the parallel fields along $\sigma $ with $U\left( 0\right)
=u $ and $V\left( 0\right) =v$.

The Jacobi fields $J$ arising from \emph{unit speed} geodesic variations are
exactly those with $\frac{DJ}{dt}\bot \sigma ^{\prime }$ (or equivalently,
with $b=0$ in the expression (\ref{JacobiK})). We call $\mathcal{J}_{\sigma
} $ the vector space consisting of all such Jacobi fields along $\sigma $.
There is a canonical surjective linear morphism 
\begin{equation}
	\mathcal{T}_{\sigma }:\mathcal{J}_{\sigma }\rightarrow T_{[\sigma ]}\mathcal{%
		\ \mathcal{G}_{\kappa }}\text{,}\hspace{1cm}\mathcal{T}_{\sigma }(J)=\left. {%
		\ \tfrac{d}{dt}}\right\vert _{0}[\sigma _{t}]\text{,}  \label{isoT}
\end{equation}%
where $\sigma _{t}$ is any variation of $\sigma $ by unit speed geodesics,
associated with $J$ (see Section 2 in \cite{Hitchin}). The kernel of $%
\mathcal{T}_{\sigma }$ is spanned by $\sigma ^{\prime }$. It is convenient
for us to work with the surjection $\mathcal{T}_{\sigma }$ instead of the
more common isomorphism defined on the space of Jacobi fields along $\sigma $
which are orthogonal to $\sigma ^{\prime }$ (see for instance \cite{SalvaiH}
for the hyperbolic case), because of the geodesic variations appearing in
the examples. By a usual abuse of notation, we sometimes write $J^{\prime }=%
\frac{DJ}{ds}$.

\begin{proposition}
	Fix $\alpha \neq 0$ and let $J\in \mathcal{J}_{\sigma }$ with $J\left(
	0\right) \bot J^{\prime }\left( 0\right) $. If 
	\begin{equation}
		\left\Vert J^{\prime }\left( 0\right) \right\Vert =\left\vert \alpha
		\right\vert \text{\ \ \ }\ \ \ \ \text{and \ \ \ \ \ \ }J^{\prime }\left(
		0\right) =\alpha J\left( 0\right) \times \sigma ^{\prime }\left( 0\right) 
		\text{,}  \label{ecuacionesJ}
	\end{equation}
	then $\mathcal{T}_{\sigma }\left( J\right) $ is $\alpha $-admissible.
	Moreover, the converse is true if $\kappa =0,-1$. See Figure \ref{fig:Jacobi}.
\end{proposition}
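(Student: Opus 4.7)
The plan is to read off an $\alpha$-helicoidal curve realizing $\mathcal{T}_\sigma(J)$ directly from the initial data of $J$, exploiting the fact that $\ker \mathcal{T}_\sigma = \mathbb{R}\sigma'$. First I would replace $J$ by $J - a \sigma'$, where $a = \langle J(0), \sigma'(0)\rangle$; this preserves $\mathcal{T}_\sigma(J)$ and all three hypotheses (the last two because $(\sigma')'(0) = 0$ and $\sigma'(0) \times \sigma'(0) = 0$), while arranging $J(0) \perp \sigma'(0)$. The conditions then force $\|J(0)\| = 1$, since $|\alpha| = \|J'(0)\| = \|\alpha J(0) \times \sigma'(0)\| = |\alpha|\,\|J(0)\|$. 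Setting $A := J(0)$, the $\alpha$-helicoidal curve $\Gamma^\alpha_{[\sigma], \sigma(0), A}$ produces a geodesic variation $\sigma_t = \phi^\alpha_{[\sigma], \sigma(0), A}(\cdot, t)$ of $\sigma$. Its Jacobi field $\widehat J$ satisfies $\widehat J(0) = A$ (since $\sigma_t(0) = \gamma_A(t)$) and $\widehat J'(0) = \alpha A \times \sigma'(0)$ (by differentiating $\sigma_t'(0) = \cos(\alpha t) V_t + \sin(\alpha t) B_t$ at $t=0$, exactly as in the proof of Lemma \ref{Vv}, using that $V_t, B_t$ are parallel). Hence $\widehat J = J$ by uniqueness of Jacobi fields, so $\mathcal{T}_\sigma(J) = \mathcal{T}_\sigma(\widehat J)$ is the initial velocity of an $\alpha$-helicoidal curve, and hence $\alpha$-admissible.

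For the converse when $\kappa \in \{0, -1\}$, suppose $\mathcal{T}_\sigma(J) = (\Gamma^\alpha_{[\sigma], p, A})'(0)$ with $p = \sigma(s_0)$ for some $s_0 \in \mathbb{R}$ and unit $A \perp \sigma'(s_0)$. The Jacobi field $\widetilde J$ attached to this variation lives along $\tau(s) := \sigma(s + s_0)$ with $\widetilde J(0) = A$ and $\widetilde J'(0) = \alpha A \times \tau'(0)$. Reparametrizing, $J^*(s) := \widetilde J(s - s_0)$ is a Jacobi field along $\sigma$ with $\mathcal{T}_\sigma(J^*) = \mathcal{T}_\sigma(J)$, hence $J = J^* + c \sigma'$ for some $c \in \mathbb{R}$. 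Using (\ref{JacobiK}) with parallel fields $U, V$ along $\tau$ extending $A$ and $\alpha A \times \tau'(0)$, and setting $\hat U := U(-s_0)$, $\hat V := V(-s_0)$ (which satisfy $|\hat U| = 1$, $|\hat V| = |\alpha|$, $\langle \hat U, \hat V\rangle = 0$, and both $\perp \sigma'(0)$), a direct expansion yields
\begin{equation*}
\langle J(0), J'(0) \rangle \;=\; \sin_\kappa(s_0)\,\cos_\kappa(s_0)\,(\kappa - \alpha^2).
\end{equation*}

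The main obstacle is to deduce $s_0 = 0$ from the hypothesis $\langle J(0), J'(0)\rangle = 0$, and this is exactly where $\kappa \in \{0, -1\}$ enters: for $\kappa = 0$ the identity reads $-s_0 \alpha^2 = 0$, forcing $s_0 = 0$ since $\alpha \neq 0$; for $\kappa = -1$ it reads $-\sinh(s_0)\cosh(s_0)(1 + \alpha^2) = 0$, again forcing $s_0 = 0$. Once $s_0 = 0$, we have $\tau = \sigma$ and $J = \widetilde J + c \sigma'$ with $\widetilde J(0) = A$, $\widetilde J'(0) = \alpha A \times \sigma'(0)$, so $\|J'(0)\| = |\alpha|$ and $J'(0) = \alpha A \times \sigma'(0) = \alpha (A + c \sigma'(0)) \times \sigma'(0) = \alpha J(0) \times \sigma'(0)$, establishing (\ref{ecuacionesJ}). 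By contrast, in the spherical case the factor $\sin(s_0)\cos(s_0)$ vanishes also at $s_0 = k\pi/2$, which explains why the converse may fail when $\kappa = 1$.
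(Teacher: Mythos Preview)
Your argument is correct and follows essentially the same strategy as the paper: match $J$, modulo $\mathbb{R}\sigma'$, with the Jacobi field of the $\alpha$-helicoidal variation based at a point $\sigma(s_0)$, and then force $s_0=0$ from the orthogonality condition. The only noteworthy difference is in the bookkeeping of the converse: the paper expands $J$ from its initial data at $0$ and evaluates $\langle J(s_0),J'(s_0)\rangle$, obtaining the factor $-\kappa\|u\|^2+\|v\|^2$ and then arguing by contradiction that it cannot vanish; you instead expand the helicoidal field from $s_0$ and evaluate $\langle J(0),J'(0)\rangle$, which yields the explicit factor $\kappa-\alpha^2$ directly. Your version is slightly cleaner, since that factor is manifestly nonzero for $\kappa\in\{0,-1\}$ and $\alpha\neq 0$, and it also makes transparent why the spherical case escapes (both through $\alpha^2=\kappa$ and through the extra zeros of $\sin_\kappa\cos_\kappa$).
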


\begin{figure}[ht!]
	\centerline{
		\includegraphics[width=3in]
		{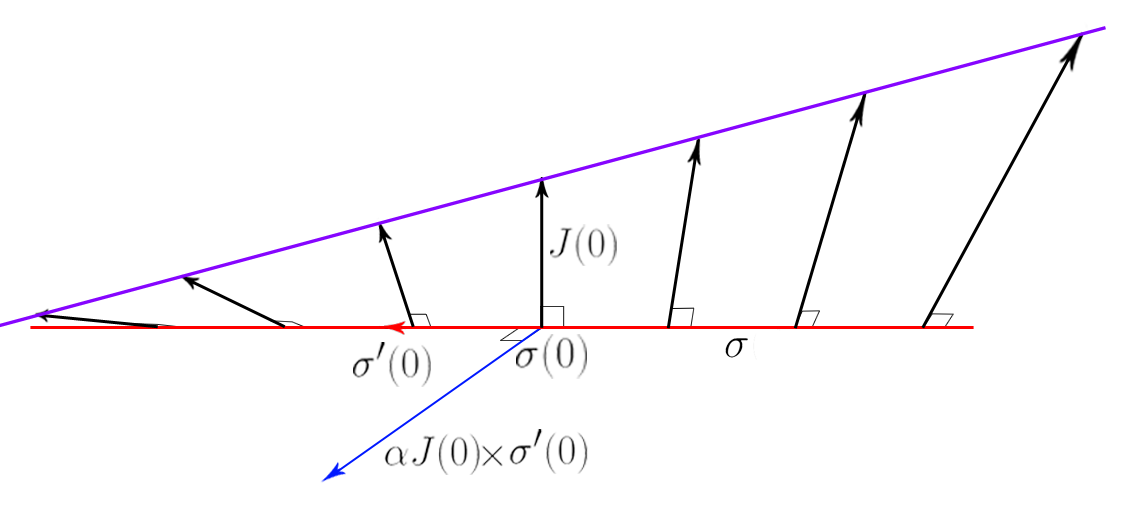}
	}
	\caption{The Jacobi field $J$ in the particular case when $J\left(0\right)$ is perpendicular to 
		$\sigma^\prime\left(0\right)$} \label{fig:Jacobi} 
\end{figure}

\begin{proof}
	Let $\ell =\left[ \sigma \right] $, $p=\sigma \left( 0\right) $ and $%
	A=J\left( 0\right) -\left\langle J\left( 0\right) ,\sigma ^{\prime }\left(
	0\right) \right\rangle \sigma ^{\prime }\left( 0\right) $, which has unit
	norm since 
	\begin{equation*}
		\left\vert \alpha \right\vert =\left\Vert J^{\prime }\left( 0\right)
		\right\Vert =\left\Vert \alpha J\left( 0\right) \times \sigma ^{\prime
		}\left( 0\right) \right\Vert =\left\vert \alpha \right\vert \left\Vert
		A\times \sigma ^{\prime }\left( 0\right) \right\Vert =\left\vert \alpha
		\right\vert \left\Vert A\right\Vert \text{.}
	\end{equation*}
	To prove the first assertion, it suffices to verify that 
	\begin{equation*}
		\mathcal{T}_{\sigma }\left( J\right) =\left. \tfrac{d}{dt}\right\vert _{0} %
		\left[ s\mapsto \phi _{\ell ,p,A}^{\alpha }\left( s,t\right) \right] \text{,}
	\end{equation*}
	or equivalently, that the Jacobi field $L$ along $\sigma $ associated with
	the variation $\phi _{\ell ,p,A}^{\alpha }$ satisfies $\mathcal{T}_{\sigma
	}\left( L\right) =\mathcal{T}_{\sigma }\left( J\right) $. We compute 
	\begin{equation*}
		L\left( 0\right) =\left. \tfrac{d}{dt}\right\vert _{0}\gamma _{\cos \left(
			\alpha t\right) V_{t}+\sin \left( \alpha t\right) B_{t}}\left( 0\right)
		=\left. \tfrac{d}{dt}\right\vert _{0}\gamma _{A}\left( t\right) =A\text{.}
	\end{equation*}
	Also, since $\left. {\frac{D}{ds}}\right\vert _{0}\left. \frac{d}{dt}
	\right\vert _{0}=\left. {\frac{D}{dt}}\right\vert _{0}\left. \frac{d}{ds}
	\right\vert _{0}$, we have that 
	\begin{equation}
		L^{\prime }\left( 0\right) =\left. {\tfrac{D}{dt}}\right\vert _{0}\left. 
		\tfrac{d}{ds}\right\vert _{0}\gamma _{\cos \left( \alpha t\right) V_{t}+\sin
			\left( \alpha t\right) B_{t}}\left( s\right) =\left. {\tfrac{D}{dt}}%
		\right\vert _{0}\cos \left( \alpha t\right) V_{t}+\sin \left( \alpha
		t\right) B_{t}=\alpha B\text{.}  \label{Lprima}
	\end{equation}
	
	On the other hand, $\alpha B=\alpha A\times \sigma ^{\prime }\left( 0\right)
	=\alpha J\left( 0\right) \times \sigma ^{\prime }\left( 0\right) =J^{\prime
	}\left( 0\right) $. Therefore, $L^{\prime }\left( 0\right) =J^{\prime
	}\left( 0\right) $ and $L\left( 0\right) $ differs from $J\left( 0\right) $
	by a multiple of $\sigma ^{\prime }\left( 0\right) $. Thus, $\mathcal{T}
	_{\sigma }\left( L\right) =\mathcal{T}_{\sigma }\left( J\right) $.
	
	Next we prove the converse for $\kappa =0,-1$. We consider $J$ as in (\ref%
	{JacobiK}) with $b=0$ and notice that $J\left( 0\right) =u+a\sigma ^{\prime
	}\left( 0\right) \ \bot \ J^{\prime }\left( 0\right) =v$. Hence $u\bot v$
	and so $U\left( s\right) \bot V\left( s\right) $ for all $s$.
	
	Suppose that $\mathcal{T}_{\sigma }\left( J\right) $ is admissible, that is, 
	$\mathcal{T}_{\sigma }\left( J\right) \in \mathcal{A}_{\kappa }^{\alpha }$.
	Since $\mathcal{T}_{\sigma }\left( J\right) \in T_{\ell }M_{\kappa }$, there
	exist $p\in \ell $ and a unit vector $A\in T_{p}M_{\kappa }$ orthogonal to $%
	\ell $ such that $\mathcal{T}_{\sigma }\left( J\right) =\left. \frac{d}{dt}
	\right\vert _{0}\Gamma _{\ell ,p,A}\left( t\right) $ (here, $p$ and $A$ are
	different from the point and the vector with those names in the first part
	of the proof). Let $s_{o}\in \mathbb{R }$ such that $\sigma \left(
	s_{o}\right) =p$. Putting $\overline{J}\left( s\right) =\left. \frac{d}{dt}%
	\right\vert _{0}\phi _{\ell ,p,A}^{\alpha }\left( s,t\right) $, we have that 
	$s\mapsto \overline{J}\left( s-s_{o}\right) \in \mathcal{J}_{\sigma }$ and
	its image under $\mathcal{T} _{\sigma }$ equals $\mathcal{T}_{\sigma }\left(
	J\right) $. Since $\mathcal{T }_{\sigma }$ is a surjective morphism, $%
	J\left( s\right) =\overline{J}\left( s-s_{o}\right) +c\sigma ^{\prime
	}\left( s\right) $ holds for some $c\in \mathbb{R}$. Then 
	\begin{equation*}
		J\left( s_{o}\right) =\overline{J}\left( 0\right) +c\sigma ^{\prime }\left(
		s\right) =\left. \tfrac{d}{dt}\right\vert _{0}\phi _{\ell ,p,A}^{\alpha
		}\left( 0,t\right) +c\sigma ^{\prime }\left( s_{o}\right) =A+c\sigma
		^{\prime }\left( s_{o}\right) \text{.}
	\end{equation*}
	Similar computations as in (\ref{Lprima}) yield 
	\begin{equation*}
		J^{\prime }\left( s_{o}\right) =\left. \tfrac{D}{ds}\right\vert
		_{s_{o}}\left. \tfrac{d}{dt}\right\vert _{0}\gamma _{\cos \left( \alpha
			t\right) V_{t}+\sin \left( \alpha t\right) B_{t}}\left( s-s_{o}\right)
		=\alpha B_{0}=\alpha A\times \sigma ^{\prime }\left( s_{o}\right) \text{.}
	\end{equation*}
	In particular, $\left\Vert J^{\prime }\left( s_{o}\right) \right\Vert
	=\left\vert \alpha \right\vert $. Therefore, if we show that $s_{o}=0$, then
	both equations in (\ref{ecuacionesJ}) are true. We observe that $J\left(
	s_{o}\right) \bot J^{\prime }\left( s_{o}\right) $. Since we know that $%
	U\bot V$, using expression (\ref{JacobiK}), we have that 
	\begin{eqnarray*}
		0 &=&2\left\langle \cos _{\kappa }\left( s_{o}\right) U(s_{o})+\sin _{\kappa
		}\left( s_{o}\right) V(s_{o}),-\kappa \sin _{\kappa }\left( s_{o}\right)
		U(s_{o})+\cos _{\kappa }\left( s_{o}\right) V(s_{o})\right\rangle \\
		&=&\left( -\kappa \left\Vert u\right\Vert ^{2}+\left\Vert v\right\Vert
		^{2}\right) \sin _{\kappa }\left( 2s_{o}\right) \text{.}
	\end{eqnarray*}
	
	Now, we see that the first factor does not vanish and hence $s_{o}=0$, as
	desired. Indeed, if it were zero, then $\left\Vert v\right\Vert =0$ and so $%
	J^{\prime }\left( s_{o}\right) =-\kappa \sin _{\kappa }\left( s_{o}\right)
	U(s_{o})$. If $\kappa =0$, this implies that $J^{\prime }\left( s_{o}\right)
	=0$. If $\kappa =-1$, then $\left\Vert u\right\Vert =\left\Vert v\right\Vert
	=0$, and so $J^{\prime }\left( s_{o}\right) =0$ as well. In either case we
	have a contradiction, since $\left\Vert J^{\prime }\left( s_{o}\right)
	\right\Vert =\left\vert \alpha \right\vert \neq 0$.
\end{proof}

\bigskip

Next we focus on the Euclidean case. Let $\phi :\mathbb{R}^2\longrightarrow 
\mathbb{R}^{3}$, $\phi \left( s,t\right) =\beta \left( t\right) +sV\left(
t\right) $ be a ruled parametrized surface with $\left\Vert V\right\Vert =1$
which is nowhere cylindrical, that is, $V^{\prime }\left( t\right) \neq 0$
for all $t$. It is said to be \textbf{standard }if $\beta ^{\prime }\bot
V^{\prime }$. It is well-known that every nowhere cylindrical ruled surface
admits such a parametrization; in this case $\beta $ is called the striction
line.

\begin{corollary}
	\label{admisibleConCruz}Let $\alpha\ne 0$, let $\phi :\mathbb{R}
	^2\longrightarrow \mathbb{R}^{3},$ $\phi \left( s,t\right) =\beta \left(
	t\right) +sV\left( t\right) $, be a standard parame\-trized ruled surface and
	let $\Gamma $ be the curve in $\mathcal{L}$ given by $\Gamma \left( t\right)
	=\left[ s\mapsto \phi \left( s,t\right) \right] $. Then $\Gamma ^{\prime
	}\left( 0\right) $ is $\alpha $-admissible if and only if 
	\begin{equation}
		\left\Vert V^{\prime }\left( 0\right) \right\Vert =\left\vert \alpha
		\right\vert \text{\ \ \ }\ \text{and \ \ }V^{\prime }\left( 0\right) =\alpha
		\beta ^{\prime }\left( 0\right) \times V\left( 0\right) \text{.}
		\label{equationsRuled}
	\end{equation}
\end{corollary}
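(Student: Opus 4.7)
The plan is to reduce directly to the previous proposition by identifying the Jacobi field arising from the variation $\phi$. Each curve $\phi_{t}(s)=\beta(t)+sV(t)$ is a straight line, and since $\|V(t)\|=1$ it is parametrized by arc length; so $\phi$ is a variation by unit speed geodesics, with $\phi_{0}=\sigma$. Its associated Jacobi field along $\sigma$ is
\begin{equation*}
J(s)=\left.\tfrac{d}{dt}\right|_{0}\phi_{t}(s)=\beta'(0)+sV'(0),
\end{equation*}
so $J(0)=\beta'(0)$ and $J'(0)=V'(0)$ (constant in $s$).

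Next I would check the hypotheses of the previous proposition. Differentiating $\langle V(t),V(t)\rangle\equiv 1$ at $t=0$ yields $V'(0)\perp V(0)=\sigma'(0)$, hence $J'\perp\sigma'$, so $J\in\mathcal{J}_{\sigma}$. The standard condition $\beta'\perp V'$ translates exactly to $J(0)\perp J'(0)$, which is the orthogonality hypothesis of the proposition.

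By the definition of $\Gamma$ and of the surjection $\mathcal{T}_{\sigma}$, we have $\Gamma'(0)=\mathcal{T}_{\sigma}(J)$. Applying the previous proposition in the Euclidean case (where both implications hold), $\Gamma'(0)$ is $\alpha$-admissible if and only if
\begin{equation*}
\|J'(0)\|=|\alpha|\quad\text{and}\quad J'(0)=\alpha\,J(0)\times\sigma'(0),
\end{equation*}
and substituting the expressions for $J(0)$ and $J'(0)$ gives precisely (\ref{equationsRuled}).

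There is no real obstacle here; the proof is essentially a dictionary between the language of Jacobi fields and the language of striction-line parametrizations of ruled surfaces. The only point worth flagging is that the \emph{standard} parametrization hypothesis is exactly what guarantees the orthogonality $J(0)\perp J'(0)$ required to invoke the previous proposition, which is why this hypothesis cannot be dropped without further work.
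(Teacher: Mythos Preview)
Your proof is correct and follows essentially the same approach as the paper: identify the Jacobi field $J(s)=\beta'(0)+sV'(0)$ associated with the ruled-surface variation, observe that the standard hypothesis gives $J(0)\perp J'(0)$, and invoke the previous proposition in the Euclidean case. Your version is slightly more explicit (you verify $J'\perp\sigma'$ and note $\sigma'(0)=V(0)$), but the argument is the same.
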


\begin{proof}
	Let $\sigma \left( s\right) =\phi \left( s,0\right) $ and let $J$ be the
	Jacobi field along $\sigma $ associated with the variation $\phi $, that is, $%
	J\left( s\right) =\beta ^{\prime }\left( 0\right) +sV^{\prime }\left(
	0\right) $. Since $\phi $ is standard we have that $J\left( 0\right) =\beta
	^{\prime }\left( 0\right) $ is orthogonal to $J^{\prime }\left( 0\right)
	=V^{\prime }\left( 0\right) $. Now, $\Gamma ^{\prime }\left( 0\right) = 
	\mathcal{T}_{\sigma }\left( J\right) $, and so the assertion is an immediate
	consequence of the previous proposition in the Euclidean case.
\end{proof}

\bigskip

In the next proposition we present the details of Example (c) after
Proposition \ref{fibratrivial}.

\begin{proposition}
	\label{HelCirc}Given $\alpha \neq 0$, let $\varphi $ be the ruled surface
	describing the circular helicoid with radius $r$ and angular velocity $%
	\alpha $, that is, $\varphi \left( s,t\right) =c\left( t\right) +sv\left(
	t\right) $ with 
	\begin{equation*}
		c\left( t\right) =r\left( \cos \left(\tfrac t r\right)e_1 + \sin \left(
		\tfrac t r\right) e_2\right) \text{\ \ \ \ \ and \ \ \ \ }v\left( t\right) =
		\cos \left( \alpha t\right) \tfrac 1 r\, c(t) + \sin \left( \alpha t\right)
		e_3 \text{, }
	\end{equation*}
	and let $\Gamma \left( t\right) =\left[ s\mapsto \varphi \left( s,t\right) %
	\right] $ be the associated curve in $\mathcal{L}$. Then $\Gamma ^{\prime
	}\left( 0\right) $ is not $\alpha $-admissible.
\end{proposition}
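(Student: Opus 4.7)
The plan is to reduce the statement to Corollary \ref{admisibleConCruz}. The given parametrization $\varphi(s,t)=c(t)+sv(t)$ is, however, not standard in the sense required by that corollary. Indeed, a direct computation at $t=0$ gives
\begin{equation*}
c(0)=r\,e_{1},\qquad c'(0)=e_{2},\qquad v(0)=e_{1},\qquad v'(0)=\tfrac{1}{r}\,e_{2}+\alpha\,e_{3},
\end{equation*}
so that $\langle c'(0),v'(0)\rangle =1/r\neq 0$.

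To repair this, I would reparametrize each line of $\Gamma$ by an $s$-shift that depends on $t$. Setting
\begin{equation*}
\phi(s,t)=\varphi(s+u(t),t)=\beta(t)+s\,v(t),\qquad \beta(t)=c(t)+u(t)\,v(t),
\end{equation*}
does not change $\Gamma$, since $[s\mapsto\phi(s,t)]=[s\mapsto\varphi(s,t)]=\Gamma(t)$. Because $\|v\|\equiv 1$ gives $\langle v,v'\rangle\equiv 0$, one has $\langle\beta',v'\rangle =\langle c',v'\rangle +u\|v'\|^{2}$, and choosing $u(t)=-\langle c'(t),v'(t)\rangle/\|v'(t)\|^{2}$ (well defined since $v'\neq 0$) produces a globally standard parametrization $\phi$ of the very same family of lines; only the value at $t=0$ is needed below.

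With $\phi$ standard, Corollary \ref{admisibleConCruz} applies: $\Gamma'(0)$ is $\alpha$-admissible if and only if $\|v'(0)\|=|\alpha|$ and $v'(0)=\alpha\,\beta'(0)\times v(0)$. The first condition already fails, because
\begin{equation*}
\|v'(0)\|^{2}=\tfrac{1}{r^{2}}+\alpha^{2}>\alpha^{2},
\end{equation*}
so $\Gamma'(0)\notin\mathcal{A}_{0}^{\alpha}$, as claimed.

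There is no real obstacle: the only conceptual step is to notice that the natural parametrization of the circular helicoid is not standard and to pass to one that is by a shift in $s$ which preserves the director $v(t)$. Once this is done, the extra contribution $(1/r)e_{2}$ in $v'(0)$, coming from the circular motion of the axis, makes $\|v'(0)\|$ strictly larger than $|\alpha|$ for every $r>0$, and the criterion of the corollary kills $\alpha$-admissibility immediately (independently of the rotational condition $V'(0)=\alpha\,\beta'(0)\times V(0)$, which one need not even check).
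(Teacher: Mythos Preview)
Your proof is correct and follows essentially the same route as the paper's: both pass to a standard parametrization of the ruled surface (you give the explicit $s$-shift $u(t)=-\langle c',v'\rangle/\|v'\|^{2}$, the paper just invokes the well-known existence of the striction-line parametrization), observe that the director $v(t)$ is unchanged under this reparametrization, and then kill $\alpha$-admissibility via the first condition of Corollary~\ref{admisibleConCruz} by computing $\|v'(0)\|^{2}=\alpha^{2}+1/r^{2}>\alpha^{2}$. Your version is slightly more explicit about why the standard reparametrization preserves $v$, which is the only subtle point.
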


\begin{proof}
	Since $\varphi $ is nowhere cylindrical, it admits a standard
	parametrization $\psi \left( s,t\right) =\beta \left( t\right) +sv\left(
	t\right) $ whose associated curve in $\mathcal{L}$ is $\Gamma$. By the Lemma
	above, we have then that $\left\Vert v^{\prime }\left( 0\right) \right\Vert
	=|\alpha |$ is a necessary condition for $\Gamma ^{\prime }\left( 0\right) $
	to be $\alpha $-admissible. But $\left\Vert v^{\prime }\left( 0\right)
	\right\Vert ^{2}=\left\Vert \left( 1/ r\right)e_2+\alpha e_3\right\Vert
	^{2}\allowbreak=\alpha ^{2}+1/r^{2}$. Then, $\Gamma ^{\prime }\left( 0\right) $ is not $%
	\alpha $-admissible.
\end{proof}

\bigskip

Now we characterize the $\alpha $-admissible homogeneous curves in $\mathcal{%
	\ L}$, that is, those which are orbits of monoparametric groups of rigid
transformations. We exclude the trivial case $\alpha =0$. For $s\in \mathbb{R%
}$, let $R_{s}$ be the rotation through the angle $s$ around the $z$-axis
and $T_{s}$ the translation given by $T_{s}\left( x\right) =x+se_{3}$.

\begin{proposition}
	\label{homog}\emph{a)} Any homogeneous curve in $\mathcal{L}$ is congruent,
	via an orientation preserving isometry, to the orbit under the one parameter
	group $t\mapsto R_{\theta t}T_{\lambda t}$ \emph{(}for some $\theta ,\lambda 
	$\emph{)} of the oriented line 
	\begin{equation}
		\ell =\left[ s\mapsto \rho e_{2}+s\left( \sin \eta \ e_{1}+\cos \eta \
		e_{3}\right) \right] \text{,}  \label{ele=}
	\end{equation}
	for some $\rho \geq 0$ and $\eta $.
	
	\medskip
	
	\emph{b)} Let $\alpha \neq 0$. Then the curve $\Gamma $ in $\mathcal{L}$
	given by $\Gamma \left( t\right) =R_{\theta t}T_{\lambda t}\ell $ is $\alpha 
	$-admissible if and only if 
	\begin{equation}
		\left\vert \theta \sin \eta \right\vert =\left\vert \alpha \right\vert \text{
			\ \ \ \ and \ \ \ }\alpha \left( \lambda +\rho \theta \cot \eta \right)
		=\theta \text{.}  \label{equationsH}
	\end{equation}
\end{proposition}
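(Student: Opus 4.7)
The plan for part (a) is to invoke Chasles' theorem: any one-parameter subgroup of $G_0$ is, after conjugation by an orientation-preserving isometry, a screw motion of the form $t\mapsto R_{\theta t}T_{\lambda t}$ about the $z$-axis (the cases $\lambda=0$ and $\theta=0$ cover pure rotations and pure translations respectively). Replacing $\Gamma$ by its image under the conjugating isometry, we may assume $\Gamma(t)=R_{\theta t}T_{\lambda t}\ell_{0}$. The centralizer of the screw group in $G_0$ contains all rotations about the $z$-axis and all translations along $e_3$; using these one can bring the foot of the common perpendicular from the $z$-axis to $\ell_0$ onto the positive $y$-axis in the plane $z=0$, giving the normal form (\ref{ele=}) with $\rho\ge 0$ the distance from $\ell_0$ to the $z$-axis and $\eta$ the angle between $\ell_0$ and $e_3$.

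For part (b), I would write $\Gamma$ as the curve associated with the ruled surface $\varphi(s,t)=R_{\theta t}T_{\lambda t}\sigma(s)=\beta(t)+sV(t)$, where $\sigma(s)=\rho e_2+s(\sin\eta\,e_1+\cos\eta\,e_3)$ is the unit-speed parametrization of $\ell_0$, and
\begin{equation*}
\beta(t)=R_{\theta t}(\rho e_2)+\lambda t\,e_3,\qquad V(t)=R_{\theta t}(\sin\eta\,e_1)+\cos\eta\,e_3.
\end{equation*}
A direct calculation yields $\beta'(0)=-\rho\theta\,e_1+\lambda\,e_3$ and $V'(0)=\theta\sin\eta\,e_2$, so automatically $\beta'(0)\perp V'(0)$, and $\varphi$ is nowhere cylindrical (hence standard) precisely when $\theta\sin\eta\ne 0$.

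In this nondegenerate range, Corollary \ref{admisibleConCruz} reduces $\alpha$-admissibility of $\Gamma'(0)$ to the two conditions $\|V'(0)\|=|\alpha|$ and $V'(0)=\alpha\,\beta'(0)\times V(0)$. The first gives $|\theta\sin\eta|=|\alpha|$, and a short cross-product computation turns the second into $\theta\sin\eta=\alpha(\rho\theta\cos\eta+\lambda\sin\eta)$, which, after dividing by $\sin\eta$, is the second equation in (\ref{equationsH}). The only remaining point is to rule out the degenerate case $\theta\sin\eta=0$: then $\varphi$ is a cylinder, $\Gamma$ consists of parallel lines, and the first paragraph of the proof of Proposition \ref{NotControl} shows such a curve is $0$-admissible only, so it cannot be $\alpha$-admissible for $\alpha\ne 0$, while $|\theta\sin\eta|=|\alpha|$ also fails, as required. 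I expect the only delicate point to be verifying in part (a) that the normalizations can be carried out by orientation-preserving isometries that conjugate the screw group back to itself; everything else is routine computation.
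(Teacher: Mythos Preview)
Your argument is correct and follows essentially the same route as the paper: Chasles' theorem plus the centralizer of the screw group for (a), and Corollary~\ref{admisibleConCruz} applied to the ruled surface $\phi(s,t)=\beta(t)+sV(t)$ for (b), with the same computations of $\beta'(0)$, $V'(0)$ and the cross product.

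Two small points to tighten. First, you only check that $\Gamma'(0)$ is $\alpha$-admissible; to conclude that the \emph{curve} $\Gamma$ is $\alpha$-admissible you should note (as the paper does) that $\Gamma(t)=g_t\cdot\ell$ with $g_t\in G_0$, and the $G_0$-invariance of $\mathcal{A}_0^\alpha$ then propagates admissibility of $\Gamma'(0)$ to all $\Gamma'(t)$. Second, your citation of Proposition~\ref{NotControl} for the degenerate case $\theta\sin\eta=0$ points in the wrong direction: that proposition shows $0$-admissible $\Rightarrow$ parallel lines, whereas you need parallel lines $\Rightarrow$ not $\alpha$-admissible for $\alpha\neq 0$. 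The quickest fix is to invoke the converse part of the Jacobi-field proposition preceding Corollary~\ref{admisibleConCruz}: the associated Jacobi field has $J'(0)=V'(0)=0\perp J(0)$, so admissibility would force $|\alpha|=\|J'(0)\|=0$. (The paper itself just asserts this step without proof.)
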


For instance, for $\rho =0$, $\eta =\pi /2$, $\theta =\alpha $ and $\lambda
=1$, we have that $\Gamma =\Gamma _{o}^{\alpha }$ as in (\ref{gamcer}).
Also, for $\rho >0$, $\lambda =0$ and $\theta ,\eta $ related by the
equations, $\Gamma $ is an $\alpha $-admissible curve sweeping a hyperboloid
of one sheet.


\begin{proof}
	a) Let $t\mapsto g_{t}$ be a monoparametric group of rigid transformations
	of $\mathbb{R}^{3}$. It is well known that there exist $\theta $, $\lambda $
	and $h\in G_{0}$ such that $g_{t}=hR_{\theta t}T_{\lambda t}h^{-1}$ for all $%
	t$. Given$\ \ell ^{\prime }\in \mathcal{L}$, we can find $f\in G_{0}$
	commuting with $R_{\theta t}T_{\lambda t}$ such that $f^{-1}\left(
	h^{-1}\ell ^{\prime }\right) $ is $\ell $ as in (\ref{ele=}) for some $%
	\rho\geq 0, \eta $. Then, $t\mapsto g_{t}\ell ^{\prime }=hR_{\theta
		t}T_{\lambda t}h^{-1}\ell ^{\prime }=hR_{\theta t}T_{\lambda t}f\ell
	=hfR_{\theta t}T_{\lambda t}\ell $, as desired.
	
	\medskip
	
	b) We have that $\Gamma \left( t\right) =\left[ s\mapsto \phi \left(
	s,t\right) \right] $ with 
	\begin{equation*}
		\phi \left( s,t\right) =R_{\theta t}\left( \rho e_{2}+s\left( \sin \eta \
		e_{1}+\cos \eta \ e_{3}\right) \right) +t\lambda e_{3}=\beta \left( t\right)
		+sV\left( t\right) \text{,}
	\end{equation*}
	where $\beta \left( t\right) =\rho R_{\theta t}e_{2}+t\lambda e_{3}$ and $%
	V\left( t\right) =R_{\theta t}\left( \sin \eta \ e_{1}+\cos \eta \
	e_{3}\right) $. We may suppose that $\theta \sin \eta \neq 0,$ since
	otherwise, on the one hand, equations (\ref{equationsH}) do not hold and on
	the other hand, the orbit of $\ell $ sweeps either a plane or a cylinder and
	so it not $\alpha $-admissible for $\alpha \neq 0$. Straightforward
	computations yield that $\phi $ is a standard parametrized ruled surface, $%
	\beta ^{\prime }\left( 0\right) =\lambda e_{3}-\rho \theta e_{1}$ and $%
	V^{\prime }\left( 0\right) =\theta \sin \eta \ e_{2}$.
	
	In order to apply Corollary \ref{admisibleConCruz}, we compute $\left\Vert
	V^{\prime }\left( 0\right) \right\Vert =\left\vert \theta \sin \eta
	\right\vert $. Also, the equation $\alpha \beta ^{\prime }\left( 0\right)
	\times V\left( 0\right) =V^{\prime }\left( 0\right) $ translates into $%
	\alpha \left( \lambda e_{3}-\rho \theta e_{1}\right) \times \left( \sin \eta
	\ e_{1}+\cos \eta \ e_{3}\right) =\theta \sin \eta \ e_{2}$, or
	equivalently, 
	\begin{equation*}
		\alpha \left( \lambda \sin \eta +\rho \theta \cos \eta \right) e_{2}=\theta
		\sin \eta \ e_{2}\text{.}
	\end{equation*}
	Therefore, by the corollary, $\Gamma ^{\prime }\left( 0\right) $ is $\alpha $
	-admissible if and only if equations (\ref{equationsH}) hold. By the
	homogeneity of $\Gamma $ and $\mathcal{A}_{0}^{\alpha }$, this is equivalent
	to $\Gamma ^{\prime }\left( t\right) $ being $\alpha $-admissible for all $t$%
	.
\end{proof}

\section{Kendall's problem for some families of $\protect\alpha $-admis\-sible
	curves\label{Kendall}}

This section addresses the analogue mentioned in the introduction of the
well known rolling Kendall's problem. Given a family $\mathcal{F}$ of curves
in a smooth manifold $N$, the \textbf{Kendall number }of $\mathcal{F}$ is
the minimum number of pieces in $\mathcal{F}$ of continuous curves in $N$
taking an initial point to a final point in $N$, both arbitrary and
different.

We consider $N=\mathcal{G}_{0}=\mathcal{L}$ and two families of
distinguished $\alpha $-admissible curves there: the family $\mathcal{P}%
^{\alpha }$, consisting of all (pure) $\alpha $-helicoidal curves, that is,
all curves $\Gamma _{\ell ,p,A}^{\alpha }$ as in Definition \ref{def}, and
the family $\mathcal{H}^{\alpha }$ of all the $\alpha $-admissible
homogeneous curves in $\mathcal{L}$. Note that this renders the result in
Theorem \ref{TeoP} supefluous in the Euclidean case.

In the original Kendall's problem of a sphere rolling on the plane without
slipping and spinning, the most difficult case was to roll along successive
straight lines from a given position to another one over the same point, but
rotated through some angle. In our problem, the most complex case will be to
reach $-\ell $ from $\ell $, two lines with the same image and opposite
directions.

\subsection{Kendall's problem for the family $\mathcal{P}^{\protect\alpha }$}

\begin{proposition}
	\label{Kendall3}For $\alpha \neq 0$, the Kendall number of the family $%
	\mathcal{P}^{\alpha }$ is 3.
\end{proposition}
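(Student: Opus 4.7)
The plan is to prove the upper bound (three pieces always suffice) and the lower bound (some pair requires three) separately.

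For the lower bound I would take $\ell_1 = \ell$ and $\ell_2 = -\ell$ (the same underlying straight line with opposite orientation) and rule out one- and two-piece connections. One piece fails because along $\Gamma^{\alpha}_{\ell,p,A}(t)$ the base point travels along the straight axis $\gamma_{A}$, which as a geodesic of $\mathbb{R}^3$ never revisits the image of $\ell$. For two pieces the key observation is that the screw-motion axis $\gamma_{A}$ stays perpendicular to the moving line at \emph{every} time; hence time-reversal of an $\alpha$-helicoidal piece is again such a piece, yielding the symmetry $\ell_2 \in R_1(\ell_1) \iff \ell_1 \in R_1(\ell_2)$ (where $R_1(\ell)$ denotes the one-piece reachable set). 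A two-piece connection $\ell \to \ell' \to -\ell$ therefore forces $\ell' \in R_1(\ell) \cap R_1(-\ell)$. Taking $\ell$ to be the $z$-axis and parametrizing each piece by the axis-direction angle $\theta_{i} \in S^{1}$, the height $z_{i}$ of the axis--$\ell$ intersection, and the time $t_{i}$, direction matching for $\ell'$ yields $\cos(\alpha t_{1}) = -\cos(\alpha t_{2})$ together with two angle sub-cases ($\theta_{2} = \theta_{1}$ or $\theta_{2} = \theta_{1} + \pi$); base-point matching (forcing the difference to be parallel to the common direction) reduces, through the third component of the cross product, to $\pm(\pi/\alpha)\sin(\alpha t_{1}) = 0$. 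This forces a trivial piece and gives the contradiction.

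For the upper bound I use the transitive $G_{0}$-action on $\mathcal{L}$ to reduce to showing $R_3(\ell_1) = \mathcal{L}$ for one fixed $\ell_1$. By Proposition~\ref{sustancial} the $\alpha$-helicoidal tangent vectors span $T_{\ell_{1}}\mathcal{L}$, so $R_1(\ell_1)$ is a $3$-dimensional embedded submanifold; computing the differential of the natural evaluation map $(\ell', p', A', t') \mapsto \Gamma^{\alpha}_{\ell', p', A'}(t')$ that parametrizes $R_2(\ell_1)$ shows generic rank $4$, so $R_2(\ell_1)$ contains a dense open subset of $\mathcal{L}$ whose complement is of codimension at least one. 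By the same $R_1$-symmetry above a three-piece path $\ell_1 \to \ell_m \to \ell'' \to \ell_2$ exists precisely when $R_1(\ell_1) \cap R_2(\ell_2) \neq \emptyset$, and a $3$-dimensional submanifold of $\mathcal{L}$ fails to meet a dense open subset only if it is contained in the (lower-dimensional) complement --- a situation that the transversality of the two constructions rules out.

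The main obstacle will be the worst-case pair $\ell_2 = -\ell_1$, where the lower-bound computation shows $R_1(\ell_1) \cap R_1(-\ell_1) = \{\ell_1, -\ell_1\}$ is trivial, so the intermediate $\ell_m$ must reach $-\ell_1$ in a genuine two-piece sequence rather than one piece. I would handle this by an explicit construction: a first piece of time $t_{1} = \pi/(2\alpha)$ around a horizontal axis carrying $\ell$ to a horizontal line $\ell_1'$; a second piece whose three free parameters are adjusted so its endpoint $\ell_2'$ lies in $R_1(-\ell)$ (a codimension-$1$ constraint); and a third piece from $\ell_2'$ to $-\ell$. The most delicate technical burden is the cross-product case analysis in the lower bound and checking that the explicit construction covers the boundary case $\ell_2 = -\ell_1$ and a neighborhood thereof.
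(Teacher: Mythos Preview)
Your lower-bound strategy is close to the paper's, but the computation you sketch is not right. With $\ell$ the $z$-axis and horizontal axes $u_i=(\cos\theta_i,\sin\theta_i,0)$, the third component of $(p_1-p_2)\times v$ works out to $\sin(\alpha t_1)\bigl(-t_1+t_2\cos(\theta_1-\theta_2)\bigr)$, not $\pm(\pi/\alpha)\sin(\alpha t_1)$; in the sub-case $u_1=u_2$ this only gives $t_1=t_2$, which combined with $\cos(\alpha t_1)=-\cos(\alpha t_2)$ yields $t_1=t_2=\pi/(2\alpha)\pmod{\pi/\alpha}$ rather than a contradiction, and you still have to use the horizontal components of the line-matching. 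The paper sidesteps this by a cleaner normalisation: it uses the isotropy of $\ell$ in $G_0$ to fix the \emph{first} piece as $\Gamma_{\ell,0,e_2}$, so the direction of $\ell_1$ lies in the $e_1$-$e_3$ plane; then the second axis, being orthogonal to both $e_1$ and that direction, is forced to be $\pm e_2$, and the second piece simply retraces the first back to $\ell$.

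The upper bound is where your proposal has a genuine gap. Knowing that $R_2(\ell_2)$ contains a dense open set and that $R_1(\ell_1)$ is a $3$-submanifold does not give $R_1(\ell_1)\cap R_2(\ell_2)\neq\emptyset$; a hypersurface complement can contain $3$-submanifolds, and you never establish the transversality you invoke. Your fallback ``explicit construction'' for the worst case $\ell_2=-\ell_1$ is again a dimension count (``three free parameters adjusted to meet a codimension-$1$ constraint''), not a construction. The paper instead gives a completely explicit three-step path valid for \emph{all} pairs: normalise so that $\ell'=[s\mapsto se_1]$ and $\ell=[s\mapsto de_2+sv]$ with $v\perp e_2$; run a first helicoid along the $e_2$-axis until the direction is $-e_3$ and the foot point is far enough out on the $y$-axis; run a second helicoid along the $e_1$-axis and apply the intermediate value theorem to the distance to $\ell'$ to stop at distance exactly $\pi/(2|\alpha|)$; then a third helicoid along the common perpendicular makes a quarter-turn onto $\ell'$. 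This avoids transversality entirely.
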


We begin by stating the following proposition, that implies that this number
is greater than or equal to 3.

\begin{proposition}
	\label{nicon1nicon2}Given $\alpha \neq 0$, the oriented straight lines $\ell 
	$ and $-\ell $ cannot be connected by a continuous curve of two 
	$\alpha$-helicoidal pieces.
\end{proposition}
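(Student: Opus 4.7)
The plan is to assume for contradiction that $\ell$ and $-\ell$ are joined by two $\alpha$-helicoidal pieces meeting at a common oriented line $\ell^*$, and to extract incompatible geometric constraints on the two helicoid axes. Writing the pieces as $H_i=\Gamma^{\alpha}_{m_i,q_i,A_i}$ ($i=1,2$), we may arrange after reparametrization that $\ell$ and $\ell^*$ both lie in the family of $H_1$ and that $\ell^*$ and $-\ell$ both lie in the family of $H_2$. I would then split into cases according to whether $\ell^*$ is parallel to $\ell$.

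The argument rests on two elementary facts for $\alpha\neq 0$. \emph{Fact 1} (rigidity): every line of an $\alpha$-helicoid family meets the axis $\gamma_A$ perpendicularly at a unique point; any two non-parallel lines in the family have $\gamma_A$ (as an unoriented line of $\mathbb{R}^3$) as their unique common perpendicular; and the set of lines in $\mathcal{L}$ swept out depends only on the unoriented axis together with any one distinguished oriented line on the helicoid, since reversing $A$ merely reparametrizes the curve by $t\mapsto -t$. \emph{Fact 2} (one-piece obstruction): no single $\alpha$-helicoidal family contains both $\ell$ and $-\ell$; if $\ell$ is at time $0$ and $-\ell$ at some time $t$, matching directions forces $\sin(\alpha t)=0$ and $\cos(\alpha t)=-1$, while matching images forces $tA\in\mathbb{R}V$; combined with $A\perp V$ this yields $t=0$, contradicting $\alpha t\in\pi(2\mathbb{Z}+1)$. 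Fact~2 also rules out the degenerate possibilities $\ell^*=\pm\ell$.

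\emph{Case A: $\ell^*\not\parallel\ell$.} By Fact~1, $\gamma_{A_1}$ is the unique common perpendicular of the unoriented lines underlying $\ell$ and $\ell^*$, while $\gamma_{A_2}$ is the common perpendicular of $-\ell$ and $\ell^*$; these coincide as unoriented lines because $\ell$ and $-\ell$ have the same image. Hence $H_1$ and $H_2$ share an unoriented axis and both contain the oriented line $\ell^*$, so by Fact~1 they trace the same family in $\mathcal{L}$. That family would then contain $\ell$ and $-\ell$ simultaneously, contradicting Fact~2.

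\emph{Case B: $\ell^*\parallel\ell$.} Let $V$ be the direction of $\ell$ and $\epsilon V$ ($\epsilon=\pm 1$) that of $\ell^*$. The line at time $t_i$ on $H_i$ has direction $\cos(\alpha t_i)(\text{initial direction})+\sin(\alpha t_i)B_i$ with $B_i\perp V$, so matching a direction $\pm V$ forces $\sin(\alpha t_i)=0$, hence $\alpha t_i\in\pi\mathbb{Z}$; a quick inspection of the $\cos(\alpha t_i)$ factor shows that $\alpha t_1/\pi$ and $\alpha t_2/\pi$ have opposite parity, since $H_1$ transitions $V\to\epsilon V$ while $H_2$ transitions $\epsilon V\to -V$. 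On the other hand, with $A_i$ unit and perpendicular to $V$, the net perpendicular-to-$V$ displacement from $\ell$'s image to $-\ell$'s image is $t_1A_1+t_2A_2$; this must vanish, yielding $|t_1|=|t_2|$, and since $\ell^*\neq\pm\ell$ this common value is positive. But then $|\alpha t_1|=|\alpha t_2|$ is a positive number that is simultaneously an even and an odd multiple of $\pi$, a contradiction.

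The main obstacle is justifying Fact~1 cleanly, specifically the claim that an $\alpha$-helicoid is determined as a subfamily of $\mathcal{L}$ by its unoriented axis together with any single distinguished line on it, and that the axis is the \emph{unique} common perpendicular of any two non-parallel lines in the family. Once Fact~1 is in place, Case~A is essentially the uniqueness of common perpendiculars, and Case~B reduces to a short parity count.
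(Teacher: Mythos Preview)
Your argument is correct, and it is organized differently from the paper's proof. The paper fixes coordinates so that $\ell=[s\mapsto se_1]$ and the first axis is $e_2$, and then analyzes the second axis $A$ via the constraints $A\perp e_1$ and $A\perp v$ (the direction of the intermediate line $\ell_1$), obtaining $z\langle A,e_3\rangle=0$; its two subcases $z=0$ and $\langle A,e_3\rangle=0$ both collapse to $A=\pm e_2$, after which it shows the second piece retraces (a translate of) the first and lands back on $\ell$ rather than $-\ell$. Your decomposition by whether $\ell^*\parallel\ell$ is the natural coordinate-free version of this split: your Case~A (non-parallel) corresponds exactly to the paper's observation that both axes are forced to lie along the common perpendicular of $\ell$ and $\ell^*$, and your Fact~1 makes explicit what the paper uses implicitly when it says the second piece ``travels the same path'' as an $\alpha$-helicoid starting at $\ell$. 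Your Case~B replaces the paper's retracing computation by a clean parity count, which is arguably more transparent.

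Regarding your stated obstacle: Fact~1 is elementary and needs no further machinery. Two distinct rays of an $\alpha$-helicoid lie in distinct parallel planes orthogonal to the axis, so they are skew whenever they are non-parallel, and the axis is then their unique common perpendicular. For the rigidity part, if $\Gamma^{\alpha}_{\ell^*,p^*,A}(t)$ and $\Gamma^{\alpha}_{\ell^*,p^*,-A}(t)$ are compared, one checks directly from \eqref{phiAlpha} that the second equals the first evaluated at $-t$ (since $(-A)\times V=-B$), so both sweep the same subset of $\mathcal{L}$; and any $\alpha$-helicoid containing $\ell^*$ with that unoriented axis is a time-shift of one of these. This fully justifies the step in Case~A. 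Your Fact~2 is likewise immediate from the definition, exactly as you wrote it.
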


\begin{proof}
	Without loss of generality, we may suppose that $\ell =\left[ s\mapsto
	se_{1} \right] $ (so, $-\ell =\left[ s\mapsto -se_{1}\right] $) and that the
	first piece is $\Gamma _{\ell ,0,e_{2}}$, defined on the interval $\left[
	0,t_{0} \right] $. We call $\ell _{1}=\Gamma _{\ell ,0,e_{2}}\left(
	t_{0}\right)\ne -\ell$. We denote by $v$ the direction of $\ell _{1}$, which is
	orthogonal to $e_{2}$ (the direction of the axis of the first piece), so we
	can write $v=xe_{1}+ze_{3}$ with $x^2+z^2=1$.
	
	Now we assume that there exist $p\in \ell _{1}$, a unit vector $A$
	orthogonal to $\ell _{1}$ and $t_{1}$ such that $\Gamma _{\ell
		_{1},p,A}\left( t_{1}\right) =-\ell $. Since the axis of $\Gamma _{\ell
		_{1},p,A}$ is orthogonal to $-\ell $ and $\ell _{1}$, we have that $%
	\left\langle A,e_{1}\right\rangle =0=\left\langle A,v\right\rangle $. Hence $%
	z\left\langle A,e_{3}\right\rangle =0$.
	
	If $z=0$, then $v=\pm e_{1}$ and so $p=t_{0}e_{2}+s_{0}e_{1}$ for some $%
	s_{0} $. Since the axis $t\mapsto p+tA$ of $\Gamma _{\ell _{1},p,A}$
	intersects $-\ell $ at $t_{1}$, we have that $p+t_{1}A=s_{0}^{\prime }e_{1}$
	for some $s_{0}^{\prime }$. Now, 
	\begin{equation*}
		s_{o}=\left\langle t_{0}e_{2}+s_{0}e_{1},e_{1}\right\rangle =\left\langle
		-t_{1}A+s_{0}^{\prime }e_{1},e_{1}\right\rangle =s_{0}^{\prime }\text{.}
	\end{equation*}
	Then there exists $\varepsilon =\pm 1$ such that $A=\varepsilon e_{2}$ and $%
	t_{1}=-\varepsilon t_{0}$ and thus $\Gamma _{\ell _{1},p,A}$ travels the
	same path as $\Gamma _{\ell ,s_{0}e_{1},e_{2}}$ if $\varepsilon =1$ or
	backwards if $\varepsilon =-1$. Therefore, $\Gamma _{\ell _{1},p,A}\left(
	t_{1}\right) =\ell \neq -\ell $. If $\left\langle A,e_{3}\right\rangle =0$,
	then $A=\pm e_{2}$, a situation we have already considered.
\end{proof}

\smallskip

\begin{proof}[Proof of Proposition \protect\ref{Kendall3}]
	We know from the previous proposition that the Kendall number of $\mathcal{P}%
	^{\alpha }$ is greater than or equal to 3. Given $\ell $ and $\ell ^{\prime }
	$ in $\mathcal{L}$\textbf{, }we want to achieve $\ell ^{\prime }$ from $\ell 
	$ via the juxtaposition of three $\alpha $-helicoidal curves in $\mathcal{L}$%
	. Without loss of generality we may assume that $\ell ^{\prime }=\left[
	s\mapsto se_{1}\right] $ and $\ell =\left[ s\mapsto de_{2}+sv\right] $ for
	some $d\geq 0$ and some unit vector $v$ orthogonal to $e_{2}$. We consider
	first the case $\alpha >0$.
	
	Let $\Gamma _{1}=\Gamma _{\ell,de_{2},e_{2}}^{\alpha }$, that is, the $%
	\alpha $-helicoidal curve with initial ray $\ell$ and axis parting from $%
	de_{2}$ with direction $e_{2}$. Let $y_{1}\left( t\right) e_{2} $ be the
	point where $\Gamma _{1}\left( t\right) $ intersects the $y$ -axis. Let $%
	t_{1}>0$ be such that the direction of $\ell _{1}=_{\text{def} }\Gamma
	_{1}\left( t_{1}\right) $ is $-e_{3}$ and $y_{1}\left( t_{1}\right) > \frac{%
		\pi }{2\alpha }$. See Figure \ref{threrLines}
	
	\begin{figure}
		\centering
		\begin{minipage}{0.45\textwidth}
			\centering
			\includegraphics[width=0.7\textwidth]{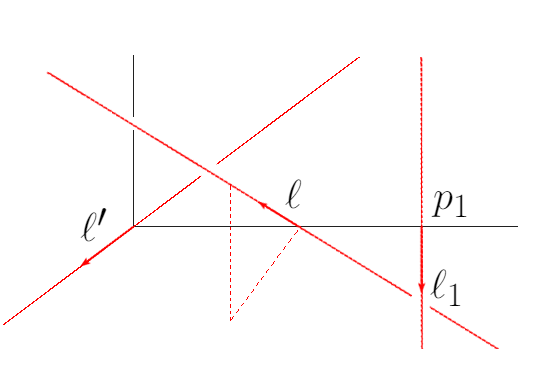} 
			\caption{The lines $\ell$, $\ell^\prime$ and $\ell_1$ intersecting the  vertical plane $x=0$}
			\label{threrLines}
		\end{minipage}\hfill
		\begin{minipage}{0.45\textwidth}
			\centering
			\includegraphics[width=0.7\textwidth]{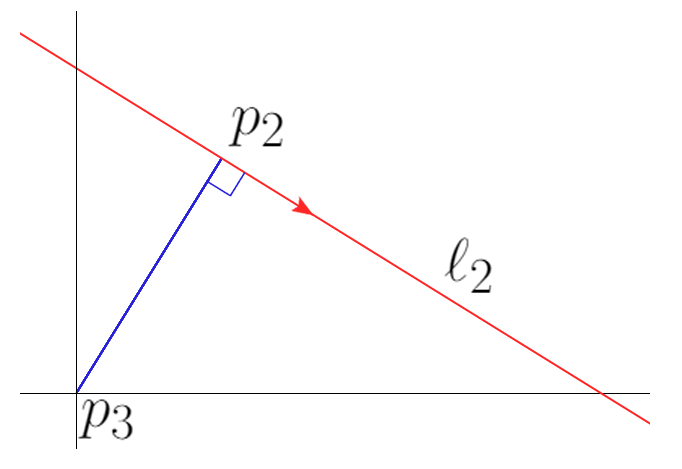} 
			\caption{The line $\ell_2$ in the plane $x=t_2$; $\left\Vert
				p_{3}-p_{2}\right\Vert=\frac {\pi}{2\alpha}$ }
			\label{triangle}
		\end{minipage}
	\end{figure}
	Let $\Gamma _{2}=\Gamma _{\ell _{1},p_{1},e_{1}}^{\alpha }$ where $%
	p_{1}=y_{1}\left( t_{1}\right) e_{2}$. For each $t$ we consider the distance 
	$f\left( t\right) $ between $\Gamma _{2}\left( t\right) $ and $\ell^{\prime
	} $. We have that $f\left( 0\right) =y_{1}\left( t_{1}\right) $. By the
	continuity of $f$, if $\tau $ is the first positive zero of $f$, there
	exists $0\leq t_{2}<\tau $ such that $f\left( t_{2}\right) =\frac{\pi }{%
		2\alpha }$.
	
	Call $\ell _{2}=\Gamma _{2}\left( t_{2}\right) $ and let $p_{2}$ and $p_{3}$
	be the points in $\ell _{2}$ and $\ell^{\prime } $, respectively, realizing
	the distance between these lines. Let $A=\frac{p_{3}-p_{2}}{\left\Vert
		p_{3}-p_{2}\right\Vert }$ and $\Gamma _{3}=\Gamma _{\ell
		_{2},p_{2},A}^{\alpha }$. Then $\Gamma _{3}\left( \frac{\pi }{2\alpha }
	\right) =\ell ^{\prime }$, since $\frac{\pi }{2\alpha }$ is the time an $%
	\alpha $ -helicoidal curve takes to make one fourth of a complete turn. See Figure \ref{triangle}.

	If $\alpha <0$, similar arguments hold, setting the direction of $\ell _{1}$
	equal to $e_{3}$ and substituting $\frac{\pi }{2\alpha }$ with $\frac{\pi }{
		2\left\vert \alpha \right\vert }$.
\end{proof}

\subsection{Kendall's problem for the family $\mathcal{H}^{\protect\alpha }$}

The elements of the family $\mathcal{H}^{\alpha }$ of all $\alpha $%
-admissible homogeneous curves in $\mathcal{L}$ for $\alpha \neq 0$ have
been described in Proposition \ref{homog}.

\begin{proposition}
	Let $\alpha \neq 0$. The Kendall number of the family $\mathcal{H}^{\alpha }$
	is $2$.
\end{proposition}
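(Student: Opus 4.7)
I plan to prove the two inequalities separately.

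For the lower bound, I would show that the pair $(\ell,-\ell)$ cannot be joined by a single $\mathcal{H}^{\alpha}$ curve. By Proposition \ref{homog}(a), up to conjugation by an orientation-preserving isometry of $\mathbb{R}^{3}$, any homogeneous $\alpha$-admissible curve can be written as $t\mapsto R_{\theta t}T_{\lambda t}(\ell_{0})$, where $\ell_{0}=\left[s\mapsto \rho e_{2}+s(\sin\eta\,e_{1}+\cos\eta\,e_{3})\right]$ and $(\rho,\eta,\theta,\lambda)$ satisfy the equations (\ref{equationsH}). Since the conjugating isometry commutes with orientation reversal, it is enough to check that the equation $R_{\theta t_{0}}T_{\lambda t_{0}}(\ell_{0})=-\ell_{0}$ admits no solution compatible with (\ref{equationsH}). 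An explicit computation, matching directions and base points, forces $\eta=\pi/2$, $\rho=0$, $\lambda=0$, and $\theta t_{0}\equiv \pi\pmod{2\pi}$; substituting into the second equation of (\ref{equationsH}) gives $\theta=0$, contradicting the first equation $\left\vert \theta\sin\eta\right\vert =\left\vert \alpha\right\vert \neq 0$. Hence $(\ell,-\ell)$ requires at least two pieces.

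For the upper bound, given distinct $\ell,\ell'\in \mathcal{L}$, I would exhibit an intermediate line $\ell''\in \mathcal{L}$ together with two $\mathcal{H}^{\alpha}$ curves connecting $\ell\to \ell''$ and $\ell''\to \ell'$. By homogeneity I can place $\ell$ in normal position, and then build $\ell''$ by a case analysis. Since $\mathcal{P}^{\alpha}\subset \mathcal{H}^{\alpha}$, pairs $(\ell,\ell')$ with $\ell'\neq -\ell$ can be handled by two pure helicoidal pieces, combining the $3$-dimensional reachable set from $\ell$ via $\mathcal{P}^{\alpha}$ with a parameter-count argument and the obstruction analysis of Proposition \ref{nicon1nicon2}. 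For the exceptional case $\ell'=-\ell$ (precisely the one ruled out for $\mathcal{P}^{\alpha}$ with two pieces) I would exploit the extra flexibility provided by the hyperboloid-sweeping curves, i.e.\ the $\lambda=0$ solutions of (\ref{equationsH}) from Proposition \ref{homog}(b), choosing an explicit intermediate $\ell''$ (for instance a line perpendicular to $\ell$ through a convenient point) and verifying that both connecting pieces lie in $\mathcal{H}^{\alpha}$.

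The main obstacle is this last step: verifying that the case $\ell'=-\ell$ actually admits two $\mathcal{H}^{\alpha}$ pieces. A dimension count is encouraging, as the set of lines reachable from $\ell$ in one $\mathcal{H}^{\alpha}$ step is $3$-dimensional (two free parameters for the choice of orbit through $\ell$ in the normal form of Proposition \ref{homog}, plus one for time), and the analogous backward-reachable set from $-\ell$ is also $3$-dimensional in the $4$-dimensional manifold $\mathcal{L}$, so a generic intersection is $2$-dimensional. Exhibiting at least one point of the intersection, by an explicit choice of hyperboloid-sweeping curve, is the technical heart of the argument.
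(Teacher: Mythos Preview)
Your lower bound argument is correct and essentially the same as the paper's: both reduce to the normal form of Proposition~\ref{homog}, force $\cos\eta=0$ from the direction condition, and then derive a contradiction with equations~(\ref{equationsH}). The details differ slightly (the paper observes $\lambda=\pm1$ and then argues via the $z$-coordinate of $g_t\ell$, while you push further to $\rho=0$, $\lambda=0$ before invoking~(\ref{equationsH})), but both are fine.

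The upper bound, however, has a genuine gap. Your plan for generic pairs $(\ell,\ell')$ with $\ell'\neq-\ell$ rests on a dimension count (the one-step $\mathcal{P}^{\alpha}$-reachable set is $3$-dimensional in the $4$-manifold $\mathcal{L}$) together with an appeal to Proposition~\ref{nicon1nicon2}. But Proposition~\ref{nicon1nicon2} only shows that $-\ell$ is \emph{not} reachable in two $\mathcal{P}^{\alpha}$-steps; it says nothing about which lines \emph{are} reachable, and a dimension heuristic is not a proof that the forward and backward reachable sets actually meet. Likewise, for the exceptional case $\ell'=-\ell$ you acknowledge that producing an explicit intermediate $\ell''$ reachable by hyperboloid-sweeping pieces is ``the technical heart'', but you do not carry it out. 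So as written, the upper bound is only a plausibility argument.

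The paper bypasses all of this with one clean lemma you are missing: any two \emph{intersecting} oriented lines $\ell,\ell'$ with $\ell'\neq\pm\ell$ can be joined by a \emph{single} curve in $\mathcal{H}^{\alpha}$. Placing them symmetrically about the $z$-axis with opening angle $2\eta$, the orbit of $\ell$ under $t\mapsto R_{\theta t}T_{\lambda t}$ with $\rho=0$, $\theta=\alpha/\sin\eta$, $\lambda=1/\sin\eta$ satisfies~(\ref{equationsH}) and hits $\ell'$ at $t=\pi/\theta$. Once you have this, the upper bound is immediate and uniform: for arbitrary $\ell,\ell'$ take $\ell_1$ to be (an orientation of) their common perpendicular, which intersects each of them at angle $\pi/2$, and concatenate the two one-step connections $\ell\to\ell_1\to\ell'$. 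The case $\ell'=-\ell$ needs no separate treatment (any line orthogonal to $\ell$ serves as $\ell_1$), and no dimension counting or hyperboloid curves are required.
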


\begin{proof}
	First of all, we check that two intersecting lines $\ell $ and $\ell
	^{\prime }$, with $\ell ^{\prime }\neq \pm \ell $, can be joined by one
	curve in $\mathcal{H}^{\alpha }$. If they form an angle $0<2\eta <\pi $, we
	may suppose without loss of generality that 
	\begin{equation*}
		\ell =\left[ s\mapsto s\left( \sin \eta ,0,\cos \eta \right) \right] \text{\
			\ \ \ \ and\ \ \ \ \ }\ell ^{\prime }=\left[ s\mapsto \left( 0,0,\tfrac{\pi 
		}{\alpha }\right) +s\left( -\sin \eta ,0,\cos \eta \right) \right] \text{.}
	\end{equation*}%
	\begin{figure}[ht!]
		\label{fig:Poposition20} 
		\centerline{
			\includegraphics[width=1.5in]
			{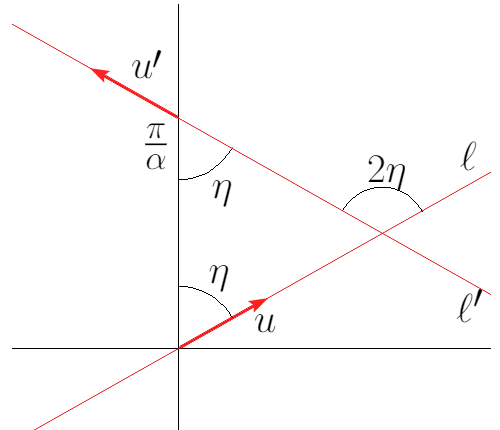}
		}
		\caption{Standard position of $\ell$ and $\ell^\prime$ when they intersect. Also, 
			$(dh_{\pi/\theta})_{0}(u)=(u^\prime)$}
		\label{linesIntersect}
	\end{figure}
	Let $\Gamma $ be the curve in $\mathcal{L}$ determined by the orbit of $\ell 
	$ under the monoparametric group $h_t=_{\text{def}}R_{\theta t}T_{\lambda t}$ as in
	Proposition \ref{homog}, with $\theta =\frac{\alpha }{\sin \eta }$ and $%
	\lambda =\frac{1}{\sin \eta }$ ($\rho =0$). The curve is $\alpha$-admissible 
	since the corresponding equations given in (\ref{equationsH})
	are satisfied. One can also verify easily that $\Gamma \left( \frac{\pi }{%
		\theta }\right) =\ell ^{\prime }$. Thus, $\ell $ and $\ell ^{\prime }$ can
	be joined by one $\alpha $-admissible homogeneous curve. See Figure \ref{linesIntersect}.
	
	Now we consider two lines $\ell $ and $\ell ^{\prime }$ in $\mathcal{L}$
	that do not intersect. Let $\ell _{1}\in \mathcal{L}$ containing the
	shortest segment joining $\ell $ to $\ell ^{\prime }$, which is
	perpendicular to both of them. By the case above with $2\eta =\frac{\pi }{2}$
	, $\ell ^{\prime }$ can be reached from $\ell $ via the juxtaposition of two
	curves in $\mathcal{H}^{\alpha }$, the first joining $\ell $ to $\ell _{1}$
	and the second joining $\ell _{1}$ to $\ell ^{\prime }$. If $\ell ^{\prime
	}=-\ell $, one can take as $\ell _{1}$ any curve orthogonal to $\ell $.
	Then, the Kendall number is at most $2$.
	
	Finally, we show that the Kendall number is greater than 1. It suffices to
	see that for the monoporametric group $t\mapsto g_{t}=R_{\theta t}T_{\lambda
		t}$ as in Proposition \ref{homog}, if $t\mapsto g_{t}\left( \ell \right) $
	is $\alpha $-admissible for some $\ell \in \mathcal{L}$, then $g_{t}\left(
	\ell \right) \neq -\ell $ for all $t$. We may suppose that $\ell =\left[
	s\mapsto \rho e_{2}+sv\right] $ is as in (\ref{ele=}). The direction of $%
	g_{t}\left( \ell \right) $ is $R_{t}\left( v\right) $. If $g_{t}\left( \ell
	\right) =-\ell $, then $R_{t}\left( v\right) =-v$ and equating the third
	components yields $\cos \eta =-\cos \eta $ and so $\cos \eta =0$. In
	particular, $\ell $ is contained in the plane $z=0$. Now, equations (\ref%
	{equationsH}) imply that 
	\begin{equation*}
		\left\vert \theta \right\vert =\left\vert \alpha \right\vert \text{\ \ \ \
			and \ \ \ }\alpha \lambda =\theta \text{.}
	\end{equation*}
	Hence, $g_{t}=R_{\lambda \alpha t}T_{\lambda t}$ with $\lambda =\pm 1$.
	Since $g_{t}\ell $ is contained in the plane $z=\lambda t$, we have that $%
	g_{t}\ell \neq -\ell $ for all $t$ (otherwise, we get $\lambda =0$, a
	contradiction).
\end{proof}

\vspace{0.5cm}

\noindent Mateo Anarella\newline
\noindent KU Leuven, Department of Mathematics\newline
\noindent Celestijnenlaan 200b - box 2400, 3001 Leuven, Belgium\newline
\noindent mateo.anarella@kuleuven.be

\bigskip

\noindent Marcos Salvai\newline
\noindent \textsc{f}a\textsc{maf} (Universidad
Nacional de C\'{o}rdoba) and \textsc{ciem} (Conicet)\newline
\noindent Av. Medina Allende s/n, Ciudad Universitaria, CP:X5000HUA C\'ordoba, Argentina\newline
\noindent marcos.salvai@unc.edu.ar

\end{document}